\newcommand{\pushright}[1]{\ifmeasuring@#1\else\omit$\displaystyle#1$\ignorespaces\fi}
\def\moverlay{\mathpalette\mov@rlay}
\def\mov@rlay#1#2{\leavevmode\vtop{%
   \baselineskip\z@skip \lineskiplimit-\maxdimen
   \ialign{\hfil$\m@th#1##$\hfil\cr#2\crcr}}}
\newcommand{\charfusion}[3][\mathord]{
    #1{\ifx#1\mathop\vphantom{#2}\fi
        \mathpalette\mov@rlay{#2\cr#3}
      }
    \ifx#1\mathop\expandafter\displaylimits\fi}
\definecolor{indigo}{RGB}{75, 0, 130}
\definecolor{midnightblue}{RGB}{25, 25, 112}
 \let\T@ref@orig\T@ref
 \def\T@ref#1{\T@ref@orig{#1}\wrtusdrf{#1}}%
 \let\@refstar@orig\@refstar
 \def\@refstar#1{\@refstar@orig{#1}\wrtusdrf{#1}}%
 \DeclareRobustCommand\ref{\@ifstar\@refstar\T@ref}%
\pgfplotsset{compat=1.6}
\pgfplotsset{soldot/.style={color=blue,only marks,mark=*}} \pgfplotsset{holdot/.style={color=blue,fill=white,only marks,mark=*}}
\def\XXint#1#2#3{{\setbox0=\hbox{$#1{#2#3}{\int}$ }
\vcenter{\hbox{$#2#3$ }}\kern-.6\wd0}}
\tikzstyle arrowstyle=[scale=1]
\tikzstyle directed=[postaction={decorate,decoration={markings,
    mark=at position .65 with {\arrow[arrowstyle]{stealth}}}}]
\tikzstyle reverse directed=[postaction={decorate,decoration={markings,
    mark=at position .65 with {\arrowreversed[arrowstyle]{stealth};}}}]
\DeclareSymbolFont{extraup}{U}{zavm}{m}{n}
\DeclareMathSymbol{\varheart}{\mathalpha}{extraup}{86}
\DeclareMathSymbol{\vardiamond}{\mathalpha}{extraup}{87}
\newtheorem{genericthm}{Dummy}[section] 
\numberwithin{equation}{section}
\declaretheorem[name=Theorem, numberlike=genericthm]{theorem}
\declaretheorem[name=Lemma, numberlike=genericthm]{lem}
\declaretheorem[name=Example, style=definition, numberlike=genericthm]{exmp}
\declaretheorem[name=Remark, style=remark, numbered=no]{remark}
\newcommand{\mirror}[1]{\overleftarrow{#1}}
\newcommand*{\defeq}{\mathrel{\rlap{%
                     \raisebox{0.24ex}{$\m@th\cdot$}}%
                     \raisebox{-0.24ex}{$\m@th\cdot$}}%
                     =}
\renewcommand{\1}{\mathds{1}}
\newcommand{\lt}{\ensuremath <}
\newcommand{\gt}{\ensuremath >}
\renewcommand{\le}{\leqslant}
\renewcommand{\ge}{\geqslant}
\renewcommand{\d}{\dif }
\newcommand{\ve}{\varepsilon}
\renewcommand{\ss}{\subseteq}
\newcommand{\al}{\alpha}
\newcommand{\la}{\lambda}
\newcommand{\be}{\beta}
\newcommand{\de}{\delta}
\newcommand{\ze}{\zeta}
\newcommand{\f}{\frac}
\newcommand{\mf}{\mfrac}
\newcommand{\cs}{\mathscr}
\newcommand{\mc}{\mathcal}
\newcommand{\bb}{\mathbb}
\newcommand{\dt}{\dif t}
\newcommand{\dx}{\dif x}
\DeclarePairedDelimiterX{\inn}[2]{\langle}{\rangle}{#1, #2}
\DeclareMathOperator{\me}{e}
\DeclareSymbolFont{eulargesymbols}{U}{zeuex}{m}{n}
\DeclareMathSymbol{\intop}{\mathop}{eulargesymbols}{"52}
\def\d{{\rm d}}
\title[Powerfree integers and Fourier bounds ]{Powerfree integers and Fourier bounds }
\author{Sebastián Carrillo Santana}
\address{Mathematics Institute, Utrecht University,  Hans Freudenthalgebouw, Budapestlaan 6, 3584 CD Utrecht, Netherlands}
\email{s.carrillosantana@uu.nl}
\begin{document}
\begin{abstract}
We develop a general approach for showing when a set of integers $\mathscr{A}$ has infinitely many $k^{th}$ powerfree numbers without relying on equidistribution estimates for $\mathscr{A}$. In particular, we show that if the Fourier transform of $\mathscr{A}$ satisfies certain $L^{\infty}$ and $L^{1}$ bounds, and is also ``decreasing'' in some sense, then $\mathscr{A}$ contains infinitely many $k^{th}$ powerfree numbers. We then use this method to show that there are infinitely many cubefree palindromes in base $b\ge 1100$, and in the process we obtain new $L^{1}$ bounds for the Fourier transform of the set of palindromes. We also show that there are infinitely many squarefree integers such that its reverse is also squarefree in any base $b\ge 2$. Moreover, we show that there are infinitely many squarefree integers with a missing digit in base $b\ge 5$, and infinitely many such cubefree integers in base $b\ge 3$. 
\end{abstract}

\maketitle

\section{Introduction}

In the past few decades, there has been significant interest in sets of integers defined by arithmetic properties of their digits. We mention for example the work of Mauduit and Rivat \cite{MauduitRivat} on the sum of digits of primes, the work of Bourgain \cite{Bourgain1,Bourgain2} and Swaenepoel \cite{Swaenepoel} on primes with prescribed digits, the work of Maynard \cite{MaynardDigitsOfPrimes, Maynard} on primes with missing digits, and more recently the work of Dartyge, Martin, Rivat, Shparlinkski, and Swaenepoel \cite{Cecile} on reversible primes.

The purpose of this article is to develop a general approach for showing when a set of integers has infinitely many $k^{th}$ powerfree numbers. This method is primarily based on obtaining certain bounds for the Fourier transform of a set of integers. This approach applies to many sets of integers defined by arithmetic properties of their digits, because in most of such cases, the Fourier transform has a nice multiplicative structure.  

Given a set of integers $\cs{A}$ and a positive $x\in\bb{R}$, let $\cs{A}(x)\defeq \cs{A}\cap [1,x]$. We define the Fourier transform of $\cs{A}(x)$ as the function $S_x\colon [0,1]\to \bb{C}$ defined by
\[
S_x(t)\defeq \sum_{n\in\cs{A}(x)}\me(nt),
\]
where $\me(z)=\me^{2\pi i z}$. It is also convenient to define a normalized version:
\[
F_x(t)\defeq \frac{1}{\#\cs{A}(x)}|S_x(t)|.
\]
Of course $F_x(t)$ depends on the set $\cs{A}$, but for ease of notation we avoid writing such dependence; from the context it should be clear what $F_x(t)$ means, depending on the set $\cs{A}$ we are interested in. We are ready to state our main result.  
\begin{restatable}{theorem}{main}\label{th: Main Theorem}
  Suppose that $\cs{A}$ is a set of integers such that its normalized Fourier transform $F_x$ satisfies the following properties:
  \begin{enumerate}[{\rm (i)}]
    \item  An $L^{\infty}$ type estimate of the form
  \begin{equation}\label{eq: L^infinity estimate}
  F_x\Big(\frac{a}{d}\Big) \ll \exp{\Big(-c\frac{\log{x}}{\log{d}}\Big)} 
  \end{equation}
  for some absolute constant $c\gt 0$ and any integers $a,d$ with $0\lt a\lt d$ and $2\le d\le (\log{x})^B$ for some $B\gt 0$.
    
    \item $L^1$ bounds of the form
  \begin{equation}\label{eq: L^1 bounds}
  \int_{0}^{1}F_x(t)\dt \ll \frac{1}{x^{\f{1}{k}+\de}}\quad\mbox{and}\quad \int_{0}^{1}|F_x'(t)|\dt \ll x^{1-\f{1}{k}-\de}
  \end{equation}
  for some positive integer $k \ge 2$ and some $\de\gt 0$.
  \item There is some integer $b\ge 2$ such that for any $x,y\in \{b^m \; : \; m\in\bb{Z}^{+}\}$ with $x\le y$, we have 
  \begin{equation}\label{eq: Fourier transform decreasing in x}
  F_y(t)\ll F_x(t).
  \end{equation}
  \end{enumerate}
  Then $\cs{A}$ contains infinitely many $k^{th}$ powerfree integers and 
   \[
  \sum_{\substack{n\in\cs{A}(x) \\ n \text{ is } k^{\text{th}} \text{ powerfree}}}1\sim \frac{\#\cs{A}(x)}{\ze(k)} 
  \]
  as $x\to\infty$.
\end{restatable}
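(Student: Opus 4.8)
The plan is to detect the $k$th powerfree integers in $\cs A(x)$ by M\"obius inversion and to convert the resulting divisibility counts into complete exponential sums, after which the three hypotheses are applied on three complementary ranges of the modulus. Writing $A(x)=\#\cs A(x)$ and using $\mathbf 1\{n\ \text{is }k\text{th powerfree}\}=\sum_{d^k\mid n}\mu(d)$, the count becomes
\[
\sum_{\substack{n\in\cs A(x)\\ n\ k\text{th powerfree}}}1=\sum_{d\le x^{1/k}}\mu(d)\,\#\{n\in\cs A(x):d^k\mid n\},
\]
where the range $d\le x^{1/k}$ is forced by $d^k\le n\le x$. Orthogonality of additive characters expresses each inner count as
\[
\#\{n\in\cs A(x):d^k\mid n\}=\frac1{d^k}\sum_{a=0}^{d^k-1}S_x\Big(\frac a{d^k}\Big)=\frac{A(x)}{d^k}+\frac1{d^k}\sum_{a=1}^{d^k-1}S_x\Big(\frac a{d^k}\Big).
\]
The $a=0$ contributions assemble into $A(x)\sum_{d\le x^{1/k}}\mu(d)d^{-k}=A(x)\big(\ze(k)^{-1}+O(x^{-(k-1)/k})\big)$, which already yields the main term $A(x)/\ze(k)$ up to an admissible error. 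Since $|S_x(a/d^k)|=A(x)\,F_x(a/d^k)$, everything reduces to proving the error estimate
\[
\Sigma(x):=\sum_{2\le d\le x^{1/k}}\frac1{d^k}\sum_{a=1}^{d^k-1}F_x\Big(\frac a{d^k}\Big)=o(1).
\]

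I would split $\Sigma(x)$ at the threshold $z:=(\log x)^{B/k}$. For $2\le d\le z$, each fraction $a/d^k$ with $1\le a\le d^k-1$ is, in lowest terms, some $a'/q$ with $2\le q\mid d^k$, so $q\le d^k\le(\log x)^B$ and hypothesis (i) applies, giving $F_x(a/d^k)\ll\exp(-c\log x/\log q)\le\exp(-c\log x/(k\log d))$. Bounding the $d^k-1$ inner terms trivially and summing over $d\le z$ (where $\log d\le\tfrac Bk\log\log x$) produces $\ll z\exp(-c\log x/(B\log\log x))=o(1)$.

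The heart of the matter is the range $z<d\le x^{1/k}$, where (i) is unavailable and a naive completion of $F_x$ at the full scale $x$ is hopeless, since the total-variation term $d^{-k}\int_0^1|F_x'|$ is far too large when $d$ is near $z$. Here I would use hypothesis (iii) to descend to the scale adapted to each $d$. Assuming first that $x=b^m$, let $x_0=x_0(d)$ be the largest power of $b$ with $x_0\le d^k$, so that $x_0\asymp d^k$ and $x_0\le d^k\le x$; then (iii) gives $F_x(t)\ll F_{x_0}(t)$. Applying a Koksma-type completion inequality to the bounded-variation function $F_{x_0}$ over the $d^k$ equally spaced nodes $a/d^k$ yields
\[
\frac1{d^k}\sum_{a=0}^{d^k-1}F_{x_0}\Big(\frac a{d^k}\Big)\le\int_0^1F_{x_0}(t)\,\dt+\frac1{d^k}\int_0^1|F_{x_0}'(t)|\,\dt.
\]
At the scale $x_0\asymp d^k$ the two bounds in (ii) read $\int_0^1F_{x_0}\ll d^{-1-k\de}$ and $\int_0^1|F_{x_0}'|\ll d^{k-1-k\de}$, so the two terms on the right are perfectly balanced, each $\ll d^{-1-k\de}$. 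Summing over $d>z$ gives $\sum_{d>z}d^{-1-k\de}\ll z^{-k\de}=(\log x)^{-B\de}=o(1)$, completing $\Sigma(x)=o(1)$.

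Assembling the three pieces gives $\sum_{n\in\cs A(x),\ k\text{th powerfree}}1=A(x)/\ze(k)+o(A(x))$ along $x=b^m$. Infinitude is then automatic: the first bound in (ii) forces $\int_0^1F_x\to0$, which is impossible for a finite $\cs A$ (there $F_x$, and hence $\int_0^1F_x>0$, stabilize), so $A(x)\to\infty$ and there are infinitely many $k$th powerfree elements. I expect the main obstacle to be precisely this large-$d$ range: identifying the correct use of the monotonicity hypothesis (iii), namely evaluating $F_x$ at the optimal scale $x_0\asymp d^k$ at which the $L^\infty$-type input and the total-variation $L^1$ input come into balance. A secondary, more routine point is that (iii) is stated only for lengths that are powers of $b$, so one first establishes the asymptotic along $x=b^m$ and then transfers to general $x$ by monotonicity of the counting functions.
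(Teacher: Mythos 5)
Your proposal is correct and follows essentially the same route as the paper: the M\"obius-plus-orthogonality reduction to $\sum_{d\le x^{1/k}}d^{-k}\sum_{0<a<d^k}F_x(a/d^k)=o(1)$ is the paper's Lemma \ref{lem: DoubleSumFourierTransform}, the split at $(\log x)^{B/k}$ with the $L^\infty$ bound on small moduli matches $S_1$, and your ``Koksma-type completion inequality'' applied to $F_{x_0}$ at the scale $x_0\asymp d^k$ chosen via hypothesis (iii) is exactly the paper's Lemma \ref{lem: Large sieve type lemma} combined with its choice of $u=b^l$ maximal with $u\le q^k$, $u \le x$, yielding the same $\sum_{d>z}d^{-1-k\delta}\ll(\log x)^{-B\delta}$ bound for $S_2$. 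Minor differences (reducing $a/d^k$ to lowest terms before invoking (i), and your explicit argument that finiteness of $\cs{A}$ would contradict the $L^1$ bound) only add polish, and your passing to $x=b^m$ is the same reduction the paper makes.
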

\begin{remark}
Observe that if $\cs{A}(x)\asymp x^{\theta}$ for some $\theta\in (0,1]$, then by Parseval's identity, 
\[
\int_{0}^{1}|S_x(t)|^2\dt=\sum_{n\in\cs{A}(x)} 1\ll x^{\theta}. 
\]
Therefore, by the Cauchy Schwartz inequality, 
\[
\int_{0}^{1}|F_x(t)|\dt \ll \frac{1}{x^{\f{\theta}{2}}},
\]
and we have the same bound for $\norm{F_x'}$ but multiplied by $x$. This shows that \eqref{eq: L^1 bounds} always holds for some $k$ as long as $\cs{A}(x)\asymp x^{\theta}$. Of course, if the Fourier transform of $\cs{A}$ has considerable cancellation, in some cases, we expect to have a much stronger bound of the form 
\[
\int_{0}^{1}|F_x(t)|\dt\ll \frac{1}{x^{\theta-\ve}}
\]
for any $\ve\gt 0$, but this is usually difficult to show. The reason for expecting this is because, by the work of McGehee, Pigno, and Smith \cite{L1NormExponentialSums}, and independently by Konyagin \cite{KonyaginL1}, we know that 
\begin{equation}\label{eq: LowerBoundL1}
\frac{\log{x}}{x^{\theta}}\asymp\frac{\log{x}}{\#\cs{A}(x)}\ll \int_{0}^{1}|F_x(t)|\dt.
\end{equation}
In particular, if $\theta\le \f12$, our methods are not strong enough to prove the existence of infinitely many squarefree integers because \eqref{eq: L^1 bounds} would be impossible. 
\end{remark}
The main philosophy of Theorem \ref{th: Main Theorem} is that, if we have good enough $L^{1}$ and $L^{\infty}$ bounds, and if we have some kind of decreasing property for $F_x(t)$, then we can show that our set has infinitely many $k^{th}$ powerfree integers for some $k$ that depends on how strong our $L^{1}$ estimates are. There are many scenarios in which the Fourier transform $F_x(t)$ does not exactly satisfy the same properties as in Theorem \ref{th: Main Theorem}, but it nevertheless satisfies some analogous properties. Consequently, we should think of Theorem \ref{th: Main Theorem} more as a general method rather than a standalone theorem. One of the main features of this framework is that we don't rely on equidistribution estimates. In the following subsections, we state some applications of this method.
\subsection{Cubefree palindromes}
Banks, Hart, and Sakata \cite{Banks} showed that almost all palindromes are composite. Afterwards, Col \cite{Col} improved upon these results by obtaining an upper bound of the right order of magnitude for the number of palindromes less than or equal to $x$. The author did this by obtaining equidistribution estimates for palindromes in arithmetic progressions. Recently, Tuxanidy and Panario \cite{TuxanidyPanario} improved upon Col's results by extending the level of distribution of palindromes in arithmetic progressions to moduli up to $x^{\f15-\de}$.

Let $\cs{P}_b$ denote the set of nonnegative palindromes in base $b\ge 2$. Using equidistribution results for square moduli for palindromes from Tuxanidy and Panario, Chourasiya and Johnston \cite{ShashiDaniel} proved that there are infinitely many $4^{th}$ powerfree palindromes in any base $b\ge 2$, and they gave the following asymptotic:
\begin{theorem}[\cite{ShashiDaniel},Theorem 1.6]\label{th: ShashiDaniel}
  Let
  \[
  \cs{P}_b^{*}\defeq \{n\in\cs{P}_b \; : \; (n,b^3-b)=1\}.
  \]
  Then,
    \[
  \sum_{\substack{n\in\cs{P}_b^{*}(x) \\ n \text{ is } 4^{th}\text{ powerfree}}}1\sim \frac{\#\cs{P}_b^{*}(x)}{\zeta(4)}\prod_{p|b^3-b}\Big(1-\frac{1}{p^4}\Big)^{-1}.
  \]
\end{theorem}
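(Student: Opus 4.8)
The plan is to bypass the Fourier framework of Theorem~\ref{th: Main Theorem} and argue instead by the classical Möbius-inversion method, with the equidistribution of palindromes in arithmetic progressions to square moduli as the arithmetic input. Using the identity $\mathbf{1}_{n\text{ is }4\text{-free}}=\sum_{d^4\mid n}\mu(d)$ and interchanging summations, I would begin from
\[
\sum_{\substack{n\in\cs{P}_b^{*}(x)\\ n\text{ is }4^{\text{th}}\text{ powerfree}}}1=\sum_{d\le x^{1/4}}\mu(d)\,A(x;d^4),\qquad A(x;q)\defeq\#\{n\in\cs{P}_b^{*}(x):q\mid n\}.
\]
The coprimality condition enters at once: if $(d,b^3-b)>1$, then any $n$ divisible by $d^4$ shares a prime with $b^3-b$, so $A(x;d^4)=0$ and the outer sum is restricted to $d$ with $(d,b^3-b)=1$. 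The modulus $b^3-b=b(b-1)(b+1)$ is chosen precisely to remove the primes at which palindromes are not equidistributed: modulo $b-1$ a palindrome reduces to its digit sum, modulo $b+1$ to its alternating digit sum, and modulo $b$ to its last digit, so these primes must be excluded before any clean local density can emerge.

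Next I would split the range at a threshold $D=x^{\eta}$ dictated by the Tuxanidy--Panario level of distribution for square moduli from \cite{TuxanidyPanario}; the fact that $d^4=(d^2)^2$ is a perfect square is exactly why their square-moduli estimate, rather than their general estimate, is the relevant input. For $d\le D$ coprime to $b^3-b$ their result gives
\[
A(x;d^4)=\f{1}{d^4}\,\#\cs{P}_b^{*}(x)+E(x;d^4),
\]
the local factor being $1/d^4$ because for $p\nmid b^3-b$ the palindromes counted by $\cs{P}_b^{*}$ equidistribute modulo $p^4$ and the constraint $(n,b^3-b)=1$ is insensitive to divisibility by such $p^4$. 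Summing the main terms and completing the absolutely convergent series over all $d$ coprime to $b^3-b$ yields the singular series
\[
\sum_{(d,b^3-b)=1}\f{\mu(d)}{d^4}=\prod_{p\nmid b^3-b}\Big(1-\f{1}{p^4}\Big)=\f{1}{\ze(4)}\prod_{p\mid b^3-b}\Big(1-\f{1}{p^4}\Big)^{-1},
\]
which is exactly the constant in the statement, while the portion of the series with $d>D$ contributes $\ll\#\cs{P}_b^{*}(x)/D^{3}=o(\#\cs{P}_b^{*}(x))$ since $\#\cs{P}_b^{*}(x)\asymp x^{1/2}$.

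The hard part is the error analysis, and it is genuinely delicate because the density exponent here is the borderline value $\theta=\tfrac12$. Two contributions must be controlled. First, the accumulated equidistribution error $\sum_{d\le D}|E(x;d^4)|$ must be shown to be $o(x^{1/2})$, which is where the strength of the square-moduli distribution result is needed and which pins $\eta$ down to a specific small power. Second, and most troublesome, is the tail $\sum_{D<d\le x^{1/4}}A(x;d^4)$, which lies beyond the range of equidistribution. The trivial bound $A(x;d^4)\le x/d^4+1$ is useless here, as it already overshoots $x^{1/2}$ once summed, so one must instead exploit the rigid structure of palindromes: a palindrome with $L$ digits is determined by its top $\lceil L/2\rceil$ digits, so divisibility by $d^4$ is a single congruence imposed on a set of size $\asymp x^{1/2}$, and for $d^4$ exceeding the total number of palindromes the palindromic symmetry pins the high-order digits of $n=d^4m$ to the low-order ones, forcing the number of admissible $m\le x/d^4$ to be negligible. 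Balancing this combinatorial tail estimate against the equidistribution error and the main term is the crux of the argument, and it is precisely the refinement to square moduli that makes this balance achievable at the critical exponent $\theta=\tfrac12$.
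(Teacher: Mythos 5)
Your proposal does not follow this paper's method at all: here Theorem \ref{th: ShashiDaniel} is quoted from \cite{ShashiDaniel} rather than reproved, and the paper's own machinery (Theorem \ref{th: Main Theorem}, carried out for palindromes in the proof of Theorem \ref{th: Cubefree}) is designed precisely to \emph{avoid} equidistribution: every modulus beyond $(\log x)^{B}$ is handled by the $L^{1}$ estimates of Lemma \ref{lem: L1 Bound for Phi} together with the monotonicity property of Lemma \ref{lem: Decreasingness of Phi}, so no level-of-distribution barrier ever arises. What you reconstruct instead is the Chourasiya--Johnston route via Tuxanidy--Panario equidistribution. That is a legitimate strategy in principle (it is how the cited theorem was originally proved, and it reaches all bases $b\ge 2$, whereas the Fourier route here needs $b\ge 1100$), but as written your argument has a genuine gap.

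The gap is the range $x^{\eta}\lt d\le x^{1/8}$. Your two tools are: (a) equidistribution for the moduli $d^4$, which from the level $x^{1/5-\de}$ cited in this paper reaches only $d\le x^{1/20}$, and even granting a square-moduli refinement for $d^4=(d^2)^2$ reaches only $d\le x^{\eta}$ for some small fixed $\eta$; and (b) the structural ``pinning'' observation, which has content only once $d^4$ exceeds $\#\cs{P}_b(x)\asymp x^{1/2}$, i.e.\ only for $d\gt x^{1/8}$. In the intermediate range neither applies: the modulus is far beyond any proven level of distribution, yet the single congruence $d^4\mid n$ does not overdetermine the half-digit vector, and the trivial bound $A(x;d^4)\le x/d^4+1$ is, as you note yourself, useless. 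What is needed there is an individual Brun--Titchmarsh-type upper bound for palindromes in progressions --- exactly the Banks--Shparlinski input \cite{BanksShparlinski} that this paper says Chourasiya--Johnston required for their subsequent cubefree extension --- and nothing in your sketch supplies it. Moreover, even for $d\gt x^{1/8}$ your claim that the palindromic symmetry ``forces the number of admissible $m$ to be negligible'' is asserted, not proved: the palindromes with $2l+1$ digits divisible by $q$ are the solutions of one linear congruence $\sum_{i}n_i(b^i+b^{2l-i})\equiv 0 \pmod{q}$ in the half-digit vector, and controlling its solution count for general large $q$ coprime to $b^3-b$ is precisely the hard analytic content (in this paper it is what the $L^{\infty}$ bound of Lemma \ref{lem: L Infinity bound} and the $L^{1}$ bound of Lemma \ref{lem: L1 Bound for Phi} accomplish). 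Since you explicitly defer this ``crux,'' the proposal is a plan for a proof rather than a proof.
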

By obtaining new $L^{1}$ estimates for the Fourier transform of the set of palindromes, we showed independently and almost simultaneously, that for $b\ge 1100$, $\cs{P}_b^{*}$ contains infinitely many cubefree integers:
\begin{restatable}{theorem}{cubefree}\label{th: Cubefree}
  For $b\ge 1100$, $\cs{P}_b^{*}$ contains infinitely many cubefree palindromes, and 
  \[
  \sum_{\substack{n\in\cs{P}_b^{*}(x) \\ n \text{ is cubefree}}}1\sim \frac{\#\cs{P}_b^{*}(x)}{\zeta(3)}\prod_{p|b^3-b}\Big(1-\frac{1}{p^3}\Big)^{-1}.
  \]
\end{restatable}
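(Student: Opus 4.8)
The plan is to apply the method of Theorem~\ref{th: Main Theorem} with $k=3$ to the set $\cs{A}=\cs{P}_b^{*}$, for which $\#\cs{P}_b^{*}(x)\asymp x^{1/2}$, and to verify the three hypotheses for the normalized Fourier transform of palindromes. The starting point is the multiplicative structure of the exponential sum. Writing an even-length palindrome in base $b$ as $n=\sum_{j=0}^{L-1}d_j\big(b^{j}+b^{2L-1-j}\big)$ with leading digit $d_0\ne 0$ (and similarly in the odd-length case), the sum $S_x(t)$ factors, up to the leading-digit constraint, as a product of Dirichlet-type kernels, so that $F_x(t)\approx\prod_j\big|\tfrac1b\tfrac{\sin(\pi b c_j t)}{\sin(\pi c_j t)}\big|$ with $c_j=b^{j}+b^{2L-1-j}$. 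From this product representation the decreasing property \eqref{eq: Fourier transform decreasing in x} is essentially immediate: passing from $x$ to a larger power $y$ of $b$ only appends further kernel factors, each of which is $\ll 1$ after normalization, so $F_y(t)\ll F_x(t)$.

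Next I would establish the $L^{\infty}$ estimate \eqref{eq: L^infinity estimate}. Palindromes are genuinely obstructed modulo the primes dividing $b^{3}-b=b(b-1)(b+1)$ — for instance, modulo $b-1$ an even-length palindrome is congruent to twice its half-digit-sum, so it cannot equidistribute there — which is precisely why one restricts to $\cs{P}_b^{*}$ and why the Euler product in the conclusion is truncated to those primes. Away from these obstructions, i.e. for $d$ coprime to $b^{3}-b$ with $2\le d\le(\log x)^{B}$, a positive proportion of the digit positions $j$ contribute kernel factors bounded away from $1$, and counting such positions yields $F_x(a/d)\ll \exp(-c\log x/\log d)$; the bad primes are handled separately by the coprimality defining $\cs{P}_b^{*}$.

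The heart of the argument, and the step I expect to be the main obstacle, is the pair of $L^{1}$ bounds \eqref{eq: L^1 bounds} with $k=3$. Since $\#\cs{P}_b^{*}(x)\asymp x^{1/2}$, the Cauchy--Schwarz bound of the Remark gives only $x^{-1/4}$, which is useless here; one genuinely needs $\int_0^1 F_x\,\dt\ll x^{-1/3-\de}$ together with the companion bound for $\int_0^1|F_x'|\,\dt$ from \eqref{eq: L^1 bounds}. This amounts to estimating $\int_0^1\prod_j\big|\tfrac1b\tfrac{\sin(\pi b c_j t)}{\sin(\pi c_j t)}\big|\,\dt$, and the real difficulty is that the frequencies $c_j$ are not independent, so the integral of the product is not merely the product of the individual $L^{1}$ norms $\tfrac{2\log b}{\pi b}$. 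Establishing the near-independence and thereby showing that each appended pair of digits contributes a saving $\ll b^{-2/3-2\de}$ is exactly where the hypothesis $b\ge 1100$ is forced, since one needs $\tfrac{2\log b}{\pi}\ll b^{1/3-2\de}$ with room to spare; the derivative bound requires controlling the total variation of the same product. The lower bound \eqref{eq: LowerBoundL1} shows $x^{-1/2+\ve}$ is optimal, which is consistent with $\tfrac12>\tfrac13$ (so $k=3$ is admissible) but rules out $k=2$, matching the fact that we obtain cubefree rather than squarefree palindromes.

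Finally I would run the cube sieve as in the proof of Theorem~\ref{th: Main Theorem}. Writing the cubefree indicator as $\sum_{d^3\mid n}\mu(d)$ and detecting $d^3\mid n$ through additive characters gives $\#\{n\in\cs{P}_b^{*}(x):d^3\mid n\}=\tfrac{1}{d^3}\sum_{a=0}^{d^3-1}S_x(a/d^3)$. For $d\le(\log x)^{B}$ the estimate \eqref{eq: L^infinity estimate} extracts the main term; because the elements of $\cs{P}_b^{*}$ are coprime to $b^{3}-b$, only $d$ with $(d,b^{3}-b)=1$ survive, and summing $\mu(d)/d^3$ over such $d$ produces $\frac{1}{\ze(3)}\prod_{p\mid b^{3}-b}(1-p^{-3})^{-1}$, the claimed constant. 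For $d>(\log x)^{B}$ I would bound the tail by combining both estimates in \eqref{eq: L^1 bounds}: a spacing/Koksma-type inequality controls the sums of $F_x$ over the well-separated points $a/d^3$ by $\int_0^1 F_x\,\dt$ plus a gap factor times $\int_0^1|F_x'|\,\dt$, and the bounds with $k=3$ make this error $o(\#\cs{P}_b^{*}(x))$. Assembling the main term and the negligible tail yields the stated asymptotic and, in particular, infinitely many cubefree palindromes in $\cs{P}_b^{*}$.
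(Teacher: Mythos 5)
Your overall architecture (cube sieve, then $L^{\infty}$ bounds for small moduli and $L^{1}$/large-sieve bounds for large moduli) matches the paper's, and you correctly identify the $L^{1}$ estimate as the heart of the matter and the source of the constraint $b\ge 1100$: the paper's Lemma \ref{lem: L1 Bound for Phi} proves $\int_{0}^{1}|\tilde{\Phi}_l(t)|\dt\ll x^{-\al_b}$ with $\al_b\gt \f13$ precisely when $4+\log(\f{b}{2}-1)\lt b^{1/3}$, which is in the spirit of your digit-pair saving heuristic. However, there is a genuine gap in your treatment of hypothesis (iii), and it is exactly the point the paper has to work hardest to circumvent. You claim the decreasing property \eqref{eq: Fourier transform decreasing in x} is ``essentially immediate'' because enlarging $x$ ``only appends further kernel factors.'' That is false for palindromes: the palindromic constraint ties digit $j$ to digit $2L-1-j$, so the frequencies $c_j=b^{j}+b^{2L-1-j}$ in your product depend on $L$, and the kernel product for length-$(2l+3)$ palindromes is \emph{not} the length-$(2l+1)$ product times extra factors --- the two products have disjoint frequency sets. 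Worse, $S_x(t)=\sum_{l\le L}f_{2l+1}(t)$ is a sum over lengths, not a single product, so no factor-appending argument can apply to $F_x$ itself. The paper's Remark after Lemma \ref{lem: Decreasingness of Phi} makes this explicit: $f_{2l+1}$ satisfies no usable decreasing property, since its recursion $f_{2l+1}(t)=(f_1((1+b^{2l})t)-1)\sum_{j=0}^{l-1}f_{2j+1}(b^{l-j}t)$ involves all shorter lengths.

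What the paper does instead is discard the leading and middle factors via $|S_x(t)|\le b^2\sum_{l}|\Phi_l(t)|$ and work with the partial products $\Phi_l(t)=\prod_{i=1}^{l-1}f_1((b^i+b^{2l-i})t)$, whose recursion reads $\Phi_l(t)=f_1((b+b^{2l-1})t)\,\Phi_{l-1}(bt)$ --- note the dilated argument $bt$. This yields only $|\tilde{\Phi}_l(t)|\le|\tilde{\Phi}_{l-1}(bt)|$, which is \emph{not} a pointwise monotonicity statement at fixed $t$; it becomes one only after averaging over a full set of residues. That is Lemma \ref{lem: Decreasingness of Phi}: the sum $\sum_{(a,d)=1}\max_{m}|\tilde{\Phi}_{l_1}(\f{a}{d}+\f{m}{b^3-b})|$ is bounded by the corresponding sum with $l_2\le l_1$, using that $t\mapsto bt$ permutes the points $\f{a}{d}+\f{m}{b^3-b}$ when $(d,b^3-b)=1$. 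Because the decreasing property is only available in this averaged form, and only at rationals with the coprimality restrictions forced by the Tuxanidy--Panario $L^{\infty}$ bound (Lemma \ref{lem: L Infinity bound}), you cannot invoke Theorem \ref{th: Main Theorem} as a black box for $\cs{P}_b^{*}$ as your plan assumes; the paper instead re-runs the proof of Theorem \ref{th: Main Theorem} with $\tilde{\Phi}_l$ in the role of $F_x$, after a M\"obius inversion over $d\mid b^3-b$ relating $\cs{P}_b^{*}$ to odd-length palindromes. Your sieve step and your reading of where $b\ge 1100$ enters are sound, but without repairing hypothesis (iii) along these lines the argument does not close.
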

\begin{remark}
Of course the result also holds if we replace the word cubefree by $k^{th}$ powerfree for $k\ge 3$, but we work with cubefree for the sake of simplicity. 
\end{remark}
Subsequently, Chourasiya and Johnston \cite{ShashiDaniel} obtained Theorem \ref{th: Cubefree} for all bases $b\ge 2$, again using equidistribution estimates from Tuxanidy and Panario \cite{TuxanidyPanario}, and a Brun–Titchmarsh style result due to Banks
and Shparlinski \cite{BanksShparlinski}. In this paper, we give a proof of Theorem \ref{th: Cubefree}.
\subsection{Squarefree reversible integers}
Given an integer $n$ written in base $b\ge 2$ as 
\[
n=\sum_{j=0}^{k}n_j b^j
\]
for some $n_j\in \{0,\ldots, b-1\}$, we define the \emph{reverse} of $n$ in base $b$ as 
\[
\mirror{n}_b\defeq\sum_{j=0}^{k}n_j b^{k-1-j}.
\]
Recently, Dartyge, Martin, Rivat, Shparlinkski, and Swaenepoel \cite{Cecile} showed that there are infinitely many squarefree integers $n$ such that $\mirror{n}_b$ is squarefree in base $b=2$:
\begin{theorem}\label{th: ReversibleSquarefreeBase2}
  Let 
  \[
  \cs{D}_l\defeq \{ n\in \bb{Z} \; : \; 2^{l-1}\le n \lt 2^l \mbox{ and } n \mbox{ is odd}\},
  \]
  and let 
  \[
  Q(l)=\# \{ n\in \cs{D}_l \; : \; \mu^2(n)=\mu^2(\mirror{n}_2)=1\}.
  \]
  Then, 
  \[
  Q(l)\sim \frac{66}{\pi^4}\#\cs{D}_l.
  \]
\end{theorem}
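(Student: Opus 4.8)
The statement is a precise asymptotic, so the plan is to run a main-term-plus-error analysis driven by the squarefree indicator $\mu^2(m)=\sum_{d^2\mid m}\mu(d)$. Applying this to both constraints and using that every $n\in\cs{D}_l$ is odd with forced top and bottom binary digits equal to $1$ (so that any contributing $d_1,d_2$ must be odd), I would write
\[
Q(l)=\sum_{d_1,d_2\text{ odd}}\mu(d_1)\mu(d_2)\,N(d_1,d_2;l),\qquad N(d_1,d_2;l)\defeq\#\{n\in\cs{D}_l : d_1^2\mid n,\ d_2^2\mid\mirror{n}_2\}.
\]
Because $d_1^2\mid n<2^l$ and $d_2^2\mid\mirror{n}_2<2^l$, only $d_1,d_2<2^{l/2}$ occur, and I would split the double sum at a threshold $z=2^{\eta l}$ into a main range $d_1,d_2\le z$ and a tail, handling the latter by a squarefree-sieve bound.

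To evaluate $N(d_1,d_2;l)$, write $n=\sum_{j=0}^{l-1}\epsilon_j 2^j$ with $\epsilon_0=\epsilon_{l-1}=1$ fixed; the decisive structural feature is that both $n$ and $\mirror{n}_2=\sum_j\epsilon_j 2^{l-1-j}$ are $\bb{Z}$-linear in the free digits $\epsilon_1,\dots,\epsilon_{l-2}$. Detecting the two divisibility conditions with additive characters therefore yields a character sum that factors over digit positions:
\[
N(d_1,d_2;l)=\f{1}{d_1^2d_2^2}\sum_{a_1\bmod d_1^2}\sum_{a_2\bmod d_2^2}\me\Big(\f{a_1(1+2^{l-1})}{d_1^2}+\f{a_2(1+2^{l-1})}{d_2^2}\Big)\prod_{j=1}^{l-2}\Big(1+\me\Big(\f{a_1 2^j}{d_1^2}+\f{a_2 2^{l-1-j}}{d_2^2}\Big)\Big).
\]
Each factor has modulus $|1+\me(\theta_j)|=2|\cos\pi\theta_j|\le 2$, so the analysis reduces to understanding for which frequencies $(a_1,a_2)$ this product fails to decay.

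The main term comes from the ``resonant'' frequencies. The diagonal $a_1=a_2=0$ gives $N\approx\#\cs{D}_l/(d_1^2 d_2^2)$, and since
\[
\sum_{d\text{ odd}}\f{\mu(d)}{d^2}=\prod_{p\text{ odd}}\Big(1-\f{1}{p^2}\Big)=\f{8}{\pi^2},
\]
this contributes $(8/\pi^2)^2\,\#\cs{D}_l=(64/\pi^4)\,\#\cs{D}_l$. The remaining resonant frequencies are those for which $a_1 2^j+a_2 2^{l-1-j}\equiv 0$ holds for a positive proportion of $j$; these are governed by the multiplicative order of $2$ modulo $d_1^2$ and $d_2^2$, and assembling their contribution turns the naive product into a singular series $\prod_{p\text{ odd}}\delta_p$. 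I would compute the local densities $\delta_p$ by inclusion-exclusion on $\{p^2\mid n\}$ and $\{p^2\mid\mirror{n}_2\}$; the value $66/\pi^4$ reflects a genuine positive correlation between the two squarefreeness conditions, a congruence coincidence produced by the reversal at small moduli (the leading correction arising at $p=3$, where $2$ has even order), which is exactly why the constant exceeds the independence value $64/\pi^4$.

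The hard part will be the error term, i.e.\ the joint equidistribution of $(n\bmod d_1^2,\ \mirror{n}_2\bmod d_2^2)$ as $n$ ranges over $\cs{D}_l$. For a single squarefree count the analogous tail is trivial because $\#\{n:d^2\mid n\}=x/d^2+O(1)$ localizes $n$ to an arithmetic progression; here the condition $d_2^2\mid\mirror{n}_2$ is a genuine digit congruence that does not, so the only handle is the character sum above, and one must prove power-saving decay of $\prod_j|\cos\pi\theta_j|$ for all non-resonant $(a_1,a_2)$ uniformly in $d_1,d_2$. In base $2$ each digit supplies only a bounded factor, so the crux is to show that a positive proportion of the phases $\theta_j=a_1 2^j/d_1^2+a_2 2^{l-1-j}/d_2^2$ stay bounded away from $0\bmod 1$ --- a statement about how the orbits $\{2^j\bmod d_1^2\}$ and $\{2^{l-1-j}\bmod d_2^2\}$ interleave as $j$ runs. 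Carrying this out up to the squarefree threshold, where $d_1,d_2$ approach $2^{l/2}$ (the case $k=2$ demands the full level of distribution, which is precisely what makes base $2$ the borderline and most difficult case), is where I expect nearly all the work to lie; the tail $d_1,d_2>z$ would then be absorbed by the trivial bound $N\ll 2^l/(d_1^2 d_2^2)$, which sums to $O(2^l/z)=o(\#\cs{D}_l)$.
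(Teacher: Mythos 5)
First, a point of orientation: the paper does not prove this statement at all --- it is quoted (as Theorem \ref{th: ReversibleSquarefreeBase2}) from Dartyge--Martin--Rivat--Shparlinski--Swaenepoel \cite{Cecile}, and the paper's own contribution in this direction, Theorem \ref{th: n and rev(n) squarefree}, is proved by importing the Fourier bounds of Lemma \ref{lem: NewCecileResult} as black boxes. Measured against that framework, your combinatorial skeleton is the right one: the M\"obius expansion over odd $d_1,d_2$, the digit-by-digit factorization of the character sum, and the resonance analysis are exactly how such results are set up. Your heuristic for the constant is also correct: since $2\equiv -1\pmod 3$ one has $3\mid n \iff 3\mid \mirror{n}_2$, so the local factor at $p=3$ is $\tfrac{2}{3}+\tfrac13\big(1-\tfrac13\big)^2=\tfrac{22}{27}$ in place of the independence value $\big(\tfrac89\big)^2=\tfrac{64}{81}$, and $\tfrac{64}{\pi^4}\cdot\tfrac{81}{64}\cdot\tfrac{22}{27}=\tfrac{66}{\pi^4}$.

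The gaps are in the analytic core, i.e.\ exactly where the theorem is hard. (1) Your tail bound is not trivial: $N(d_1,d_2;l)\ll 2^l/(d_1^2d_2^2)$ \emph{is} the joint equidistribution statement to be proved. The honestly trivial bound, coming from injectivity of reversal on $\cs{D}_l$, is $N\ll 2^l/\max(d_1^2,d_2^2)+1$, and summing this over $z<d_1\le d_2<2^{l/2}$ gives $\gg 2^l\log(2^{l/2}/z)$, which swamps the main term; you in fact identified this obstruction yourself one sentence earlier (``the condition $d_2^2\mid\mirror{n}_2$ \ldots does not'' localize $n$) and then assumed it away. (2) With no usable tail bound you are forced to take $z$ essentially up to $2^{l/2}$, and then pointwise power-saving on non-resonant frequencies, however uniform, cannot close the argument: for each pair $(d_1,d_2)$ the error involves $\asymp d_1^2d_2^2$ frequencies with weight $1/(d_1^2d_2^2)$, so a pointwise saving of $2^{-cl}$ yields $\ll 2^{2\eta l}\cdot 2^{(1-c)l}$ after summing over $d_1,d_2\le 2^{\eta l}$, and at $\eta=\tfrac12$ this demands $c>1$, i.e.\ more cancellation than the full length of the sum. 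What is actually needed is an \emph{average} statement over the frequency --- an $L^1$ bound of the shape $\int_{0}^{1}|\mathcal{F}_x(\al,\be)|\d\be\ll x^{-\f12-\de}$ with exponent strictly beyond $\f12$ --- together with a transfer device (the large-sieve-type Lemma \ref{lem: Large sieve type lemma} plus the monotonicity property in Lemma \ref{lem: NewCecileResult}(iii)) to convert the integral bound into bounds at the rational points $a_i/d_i^2$ with $d_i$ near $2^{l/2}$. Establishing that exponent in base $2$ (where the margin is razor-thin, $\eta_b<0.465$) is the deep content of \cite{Cecile} and of the refinements in \cite{DartygeRivatSwaenepoel,GautamiYuta2}; nothing in your proposal supplies it. A further point you gloss over: the available $L^\infty$ bounds require $(d,b^3-b)=1$, i.e.\ $(d,6)=1$, which is precisely why the paper's Theorem \ref{th: n and rev(n) squarefree} imposes coprimality with $b^3-b$; your sum runs over all odd $d$, so the moduli divisible by $3$ (the source of the resonances that change $64$ into $66$) need a separate, genuinely structural treatment. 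As written, the plan identifies where the work lies but provides no mechanism to carry it out.
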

Using results for the Fourier transform associated to reversible integers from Bhowmik and Suzuki \cite{GautamiYuta}, recently improved by Dartyge, Rivat, and Swaenepoel \cite{DartygeRivatSwaenepoel}, and independently by Bhowmik and Suzuki \cite{GautamiYuta2}, we generalize Theorem \ref{th: ReversibleSquarefreeBase2} for all $b\ge 2$:
\begin{theorem}\label{th: n and rev(n) squarefree}
Let $b\ge 2$, and let 
\[
\cs{H}_b\defeq \{n\in\bb{Z}_{\ge 1} \; : \; (n,b^3-b)=1, \; (\mirror{n}_b,b^3-b)=1\}.
\]
Then, 
\[
\sum_{\substack{n\in\cs{H}_b(x) \\ n,\mirror{n}_b \text{ are squarefree}}}1\sim \frac{\#\cs{H}_b(x)}{\zeta(2)^2}\prod_{p|b^3-b}\Big(1-\frac{1}{p^2}\Big)^{-2}.
\]
\end{theorem}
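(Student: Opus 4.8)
The plan is to carry out the two-dimensional analogue of the method behind Theorem \ref{th: Main Theorem}, now applied to the pair $(n,\mirror{n}_b)$ rather than to $n$ alone. First I would open up both squarefree conditions by Möbius inversion, writing
\begin{align*}
\sum_{\substack{n\in\cs{H}_b(x)\\ n,\mirror{n}_b\text{ squarefree}}}1
&=\sum_{n\in\cs{H}_b(x)}\Big(\sum_{d_1^2\mid n}\mu(d_1)\Big)\Big(\sum_{d_2^2\mid \mirror{n}_b}\mu(d_2)\Big)\\
&=\sum_{d_1,d_2}\mu(d_1)\mu(d_2)\,N(x;d_1,d_2),
\end{align*}
where $N(x;d_1,d_2)\defeq\#\{n\in\cs{H}_b(x):d_1^2\mid n,\ d_2^2\mid \mirror{n}_b\}$. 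Since $(n,b^3-b)=(\mirror{n}_b,b^3-b)=1$ forces $(d_1d_2,b^3-b)=1$, the $d_i$ effectively range over moduli coprime to $b^3-b$ (the remaining terms vanish). I would then detect each divisibility condition with additive characters,
\[
\1[d_1^2\mid n]=\frac{1}{d_1^2}\sum_{a_1=0}^{d_1^2-1}\me\Big(\frac{a_1 n}{d_1^2}\Big),\qquad \1[d_2^2\mid\mirror{n}_b]=\frac{1}{d_2^2}\sum_{a_2=0}^{d_2^2-1}\me\Big(\frac{a_2\mirror{n}_b}{d_2^2}\Big),
\]
so that $N(x;d_1,d_2)$ becomes a weighted sum of the joint exponential sums $S_x(a_1/d_1^2,a_2/d_2^2)$, where $S_x(\al,\be)\defeq\sum_{n\in\cs{H}_b(x)}\me(\al n+\be\mirror{n}_b)$.

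The frequency $(a_1,a_2)=(0,0)$ contributes the main term $\#\cs{H}_b(x)/(d_1^2d_2^2)$. Summing the Möbius-weighted main terms over $d_1,d_2$ coprime to $b^3-b$ gives
\begin{align*}
\#\cs{H}_b(x)\Big(\sum_{(d,b^3-b)=1}\frac{\mu(d)}{d^2}\Big)^{\!2}
&=\#\cs{H}_b(x)\Big(\prod_{p\nmid b^3-b}\big(1-p^{-2}\big)\Big)^{\!2}\\
&=\frac{\#\cs{H}_b(x)}{\ze(2)^2}\prod_{p\mid b^3-b}\Big(1-\frac{1}{p^2}\Big)^{-2},
\end{align*}
which is exactly the claimed asymptotic. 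It therefore remains to show that the total contribution of the nonzero frequencies is $o(\#\cs{H}_b(x))=o(x)$. As in Theorem \ref{th: Main Theorem}, I would split the $(d_1,d_2)$ ranges into a smooth region $d_1,d_2\le(\log x)^B$ and a large region where $\max(d_1,d_2)>(\log x)^B$. On the smooth region I would invoke a two-dimensional $L^{\infty}$ estimate of the shape \eqref{eq: L^infinity estimate}, namely $|S_x(a_1/d_1^2,a_2/d_2^2)|\ll\#\cs{H}_b(x)\exp(-c\log x/\log(d_1d_2))$ for $(a_1,a_2)\neq(0,0)$; the exponential decay comfortably beats the $d_1^2d_2^2\le(\log x)^{4B}$ frequencies. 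On the large region I would use a two-dimensional $L^1$ bound together with its mixed-derivative analogue: replacing the sum over $a_1,a_2$ by $d_1^2d_2^2\iint F_x$ via a Koksma--Hlawka/discrepancy estimate, the error is controlled provided $\iint_{[0,1]^2}F_x(\al,\be)\,\d\al\,\d\be\ll x^{-1-\de}$, which is the two-variable strengthening of \eqref{eq: L^1 bounds} appropriate to detecting two square conditions at density $\theta=1$.

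The inputs that make this work — the $L^{\infty}$ and $L^1$ bounds for the joint transform $S_x(\al,\be)$ of reversible integers — are supplied by the results of Bhowmik and Suzuki \cite{GautamiYuta,GautamiYuta2} and Dartyge, Rivat, and Swaenepoel \cite{DartygeRivatSwaenepoel}; the coprimality constraints defining $\cs{H}_b$ I would incorporate by a standard Ramanujan-sum/inclusion--exclusion layer on top of these estimates for the unrestricted reversible transform, which only perturbs the local factors already visible in the main term. The \emph{main obstacle} is twofold. First, because $\mirror{n}_b$ depends on the number of base-$b$ digits of $n$, the sum $S_x(\al,\be)$ only enjoys its clean product-over-digits structure when $n$ ranges over a fixed digit length; I would therefore decompose $\cs{H}_b(x)$ by digit length $m\le\log_b x$, establish the count on each block $[b^{m-1},b^m)$, and use a decreasing property analogous to \eqref{eq: Fourier transform decreasing in x} to reassemble the blocks and pass from powers of $b$ to general $x$. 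Second, and most delicately, the large-$d$ region forces the genuinely two-dimensional discrepancy argument: one must control the joint variation of $F_x$ and verify that the cited $L^1$ bounds are strong enough to absorb the $\asymp\sqrt{x}$ admissible values of each $d_i$, which is exactly the point where the strength of $\iint F_x\ll x^{-1-\de}$, rather than a weaker power, is indispensable.
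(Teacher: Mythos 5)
Your opening moves coincide with the paper's: double Möbius inversion, detection of $d_1^2\mid n$ and $d_2^2\mid\mirror{n}_b$ by additive characters, the joint exponential sum of Dartyge--Rivat--Swaenepoel, and the same main-term computation. The gap is in your treatment of the large-modulus region, and it is fatal as stated: the two-dimensional $L^1$ bound $\iint_{[0,1]^2}F_x(\al,\be)\,\d\al\,\d\be\ll x^{-1-\de}$, which you explicitly call indispensable, cannot hold for \emph{any} set of $\asymp x$ integers. By the McGehee--Pigno--Smith/Konyagin lower bound --- the very inequality \eqref{eq: LowerBoundL1} quoted in the paper --- for each \emph{fixed} $\be$ the function $\al\mapsto S_x(\al,\be)=\sum_{n\in\cs{H}_b(x)}\me(\be\mirror{n}_b)\me(\al n)$ is an exponential sum over $\asymp x$ distinct frequencies $n$ with unimodular coefficients, so $\int_0^1|S_x(\al,\be)|\,\d\al\gg\log x$ uniformly in $\be$. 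Integrating in $\be$ gives
\[
\iint_{[0,1]^2}F_x(\al,\be)\,\d\al\,\d\be\gg\frac{\log x}{x},
\]
which contradicts $x^{-1-\de}$. No strengthening of the cited results of Bhowmik--Suzuki or Dartyge--Rivat--Swaenepoel can supply this input, so the symmetric ``both $d_i$ large'' discrepancy argument cannot be closed. (Your heuristic that two squarefree conditions at density $\theta=1$ force a joint saving of $x^{-1-\de}$ is the misstep: each modulus $d_i$ carries its own weight $1/d_i^2$ against its own $\sim d_i^2$ frequencies, so what is needed is a saving of $x^{-(1/2+\de)}$ \emph{per variable}, i.e.\ the $k=2$ strength of \eqref{eq: L^1 bounds} in each variable separately.)

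The paper's proof never integrates in both variables at once. Its inputs (Lemma \ref{lem: NewCecileResult}) are: an $L^\infty$ bound $|\mathcal{F}_x(a/d,\be)|\ll\exp(-c_b\log x/\log d)$ that is \emph{uniform in the second variable} $\be$; a \emph{one}-dimensional $L^1$ bound $\int_0^1|\mathcal{F}_x(\al,\be)|\,\d\be\ll x^{-(1/2+\de)}$ (with its $\be$-derivative analogue), valid since $\eta_b<0.465<\tfrac12$; and the decreasing property $|\mathcal{F}_y(\al,\be)|\le|\mathcal{F}_x(\al yx^{-1},\be)|$. The error term is then handled asymmetrically, exactly as in the proof of Theorem \ref{th: Cubefree}: the sum is split on $d_2$ alone; for each fixed nonzero frequency $a_2/d_2^2$ the entire $(d_1,a_1)$-sum is brought down to $O(1)$ by applying Lemma \ref{lem: Large sieve type lemma} in the $\be$-variable at scale $u\asymp\min(x,d_1^2)$ (via the decreasing property), and then the outer $(d_2,a_2)$-sum is controlled by the $L^\infty$ bound when $d_2\le(\log x)^B$ and by the same $L^1$-plus-decreasing machinery in the other variable when $d_2>(\log x)^B$ (the one-variable bounds cover both slots, e.g.\ via the symmetry $|\mathcal{F}_x(\al,\be)|=|\mathcal{F}_x(\be,\al)|$ for $x=b^k$, reversal being a bijection on $k$-digit strings). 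Your digit-length decomposition and the inclusion--exclusion layer for the coprimality conditions are consistent with the paper, but the analytic core of your proposal must be replaced by this one-variable-at-a-time argument; as written, the step you flagged as most delicate is not merely delicate but impossible.
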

\subsection{Powerfree integers with missing digits}
Filaseta and Konyagin \cite{FilasetaKonyagin} showed that there are infinitely many squarefree integers in base $b=3,4,5$ consisting only of the digits $0$ and $1$. Afterwards, using equidistribution estimates, Erd\H os, Mauduit, and S\'ark\"ozy \cite{ErdosMauduitSarkozy} obtained an asymptotic for the number of $k^{th}$ powerfree integers in base $b$ with $q$ excluded digits:
\begin{theorem}[\cite{ErdosMauduitSarkozy}, Theorem 4]\label{th: ErdosMauduitSarkozy}
  Let $b\ge 3$, and $2\le q\le b-1$ be integers, let $\cs{Q}\ss \{0,\ldots, b-1\}$ with $\#\cs{Q}=q$, and let 
   \[
   \cs{W}_b\defeq \Big\{ \sum_{j=0}^{m}n_j b^j \; : \; n_j\in \{0,\ldots ,b-1\}\setminus \cs{Q}, \, m\ge 0\Big\}.
   \]
   Assume further that for some positive integer $k$, 
  \[
  k\gt \frac{\log{b}}{\log{(b-q)}}.
  \]
  Then,
   \[
  \sum_{\substack{n\in\cs{W}_b(x) \\ n \text{ is } k^{th} \text{ powerfree}\\ (n,b^2-b)=1}}1\sim \frac{\#\cs{W}_b(x)}{\zeta(k)}\prod_{p|b^2-b}\Big(1-\frac{1}{p^k}\Big)^{-1}.
  \]
\end{theorem}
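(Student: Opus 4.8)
The plan is to obtain Theorem~\ref{th: ErdosMauduitSarkozy} from the general method of Theorem~\ref{th: Main Theorem}, rather than from equidistribution. The decisive structural fact is that, along powers of the base, the exponential sum of a missing-digit set factorizes over digit positions. Writing $x=b^{L}$, $\mc D\defeq\{0,\ldots,b-1\}\setminus\cs Q$ and $g(t)\defeq\sum_{r\in\mc D}\me(rt)$, independence of the digits gives
\[
S_x(t)=\prod_{j=0}^{L-1}g(b^{j}t),\qquad F_x(t)=\prod_{j=0}^{L-1}h(b^{j}t),\qquad h(t)\defeq\frac{|g(t)|}{b-q},
\]
where $0\le h\le 1$ and $\#\cs W_b(x)=(b-q)^{L}\asymp x^{\theta}$ with $\theta\defeq\f{\log(b-q)}{\log b}$. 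The hypothesis $k>\f{\log b}{\log(b-q)}$ is exactly $\f1k<\theta$, placing $k$ just above the barrier of \eqref{eq: LowerBoundL1}; this is the first indication that the method can reach the precise range of the theorem.

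Condition~\eqref{eq: Fourier transform decreasing in x} is immediate: for powers $x=b^{L}\le y=b^{L'}$ one has $L\le L'$ and $F_y(t)=F_x(t)\prod_{j=L}^{L'-1}h(b^{j}t)\le F_x(t)$, every extra factor being at most $1$. For the $L^{\infty}$ estimate~\eqref{eq: L^infinity estimate} I would fix $t=a/d$ with $(a,d)=1$ and $d$ coprime to $b$ (the only moduli that will occur in the sieve below) and count the positions $j$ at which $h(b^{j}a/d)$ is bounded away from $1$. Such a factor is near $1$ only when $b^{j}a/d$ is near an integer, so the count is governed by the orbit $\{b^{j}a\bmod d\}$, which is periodic of period $\ge \log d/\log b$ because $d$ is prime to $b$; one shows the number of ``good'' positions is $\gg\log x/\log d$, giving $F_x(a/d)\ll\exp(-c\log x/\log d)$. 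This is the Erd\H os--Mauduit--S\'ark\"ozy-type bound for restricted-digit sums, and the extremal case $d\approx b^{s}-1$ confirms that the shape of \eqref{eq: L^infinity estimate} is sharp.

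The main obstacle is the $L^{1}$ estimate~\eqref{eq: L^1 bounds}. As the Remark records, Parseval and Cauchy--Schwarz give only $\int_0^1 F_x\,\dt\ll x^{-\theta/2}$, enough for the theorem solely when $k>2\theta^{-1}$; to cover the full range $k>\theta^{-1}$ one must prove the near-optimal bound $\int_0^1 F_x\,\dt\ll x^{-\theta+\ve}$, that is $\int_0^1|S_x(t)|\,\dt\ll x^{\ve}$, saving essentially the whole of $x^{\theta}$. I would exploit the multiscale product $F_x=\prod_{j<L}h(b^{j}\,\cdot\,)$: the factors decorrelate across scales, so the integral should be almost multiplicative, and a recursive decomposition of $[0,1]$ into base-$b$ subintervals — peeling off one factor at a time and integrating $h$ against the partial products — ought to turn a fixed per-scale gain into a genuine power saving. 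The derivative bound then follows from a Bernstein-type inequality, since $S_x$ is a trigonometric polynomial of degree $\ll x$, whence $\int_0^1|F_x'|\,\dt\ll x\int_0^1 F_x\,\dt$. Making the per-scale constant sharp enough that the exponent beats $\theta^{-1}$ for every admissible $k$ is the delicate point, and it is exactly the kind of new $L^{1}$ input this paper is built around.

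With the three inputs in place the machinery of Theorem~\ref{th: Main Theorem} applies. The one caveat is that $F_x(a/d)$ need not decay when $\gcd(d,b)>1$ (for example $F_x(1/2)$ is independent of $L$ for $b$ even), so \eqref{eq: L^infinity estimate} is available only for $d$ prime to $b$; this is precisely why the coprimality condition $(n,b^2-b)=1$ is imposed, as it confines the $k$-free sieve to moduli coprime to $b^2-b$ and hence to $b$. Running the M\"obius sieve over these moduli and evaluating the local densities — trivial at the excised primes $p\mid b^2-b$ and equal to $1-p^{-k}$ elsewhere — reassembles the factor $\f1{\ze(k)}\prod_{p\mid b^2-b}(1-p^{-k})^{-1}$ and yields the asymptotic. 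I expect this last bookkeeping to be routine; the whole weight of the argument rests on the sharp $L^{1}$ bound of the previous paragraph.
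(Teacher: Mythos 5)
You should first be aware that the paper does not prove Theorem~\ref{th: ErdosMauduitSarkozy} at all: it is quoted from Erd\H{o}s--Mauduit--S\'ark\"ozy, whose proof rests on equidistribution estimates for $\cs{W}_b$ in arithmetic progressions. The paper's own Fourier framework is explicitly presented as yielding only the \emph{weaker} Theorem~\ref{th: SquarefreeMissingDigits} (one missing digit, restricted bases, $k\in\{2,3\}$), precisely because the available $L^1$ bounds (Maynard's transfer-matrix eigenvalue bounds, with exponents $\al\le 0.68$ in the table of Section 5) fall far short of optimal. Your proposal therefore attempts something the paper deliberately does not claim, and the reason it cannot work is a genuine gap in your argument, not a matter of delicacy.

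The gap is your key step: the ``near-optimal'' bound $\int_0^1|S_x(t)|\dt\ll x^{\ve}$, equivalently $\int_0^1 F_x(t)\dt\ll x^{-\theta+\ve}$, which you need in order to reach the full range $k>\log b/\log(b-q)$. This bound is \emph{false} for missing-digit sets, because such sets have large additive energy: when the allowed digits are small, digit-wise addition produces no carries, so the Fourier transform cannot exhibit near-optimal $L^1$ cancellation. Concretely, take $b=4$, $\cs{Q}=\{2,3\}$ (so $q=2$, $\theta=\tfrac12$, both admissible in the theorem) and $x=4^L$. Every $n\in\cs{W}_b(x)$ has digits in $\{0,1\}$, so sums of two elements have digits at most $2<4$ and never carry; hence the number of quadruples with $n_1+n_2=n_3+n_4$ factors over the $L$ digit positions, with $6$ solutions per position, giving
\[
\int_0^1|S_x(t)|^4\dt = 6^L,
\qquad
\int_0^1|S_x(t)|^2\dt = 2^L .
\]
The interpolation inequality $\norm{S_x}_2\le \norm{S_x}_1^{1/3}\norm{S_x}_4^{2/3}$ then forces
\[
\int_0^1|S_x(t)|\dt \;\ge\; \frac{2^{3L/2}}{6^{L/2}} \;=\; \Big(\frac{4}{3}\Big)^{L/2} \;=\; x^{\log(4/3)/(2\log 4)} \;\gg\; x^{0.1},
\]
a genuine power of $x$, so that $\int_0^1 F_x(t)\dt\gg x^{-\theta+0.1}$. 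No recursive decomposition, decorrelation across scales, or transfer-operator refinement can get around this: the loss is intrinsic to the additive structure of $\cs{W}_b$, not an inefficiency of the method. (This is exactly why the paper's Remark calls such bounds ``usually difficult to show'' and why \eqref{eq: LowerBoundL1} is not the only obstruction.) Consequently, the route through Theorem~\ref{th: Main Theorem} can only reach exponents $k$ with $1/k$ strictly below the true $L^1$ exponent of $F_x$, which in general lies well above $1/k$ at the EMS threshold; the full statement genuinely requires the equidistribution input of the original proof. Your monotonicity observation and $L^\infty$ sketch are sound, and your bookkeeping of the coprimality condition $(n,b^2-b)=1$ matches how the paper handles restricted moduli, but these cannot compensate for the failure of the central $L^1$ estimate.
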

Observe that the above result proves Filaseta and Konyagin's result for $b=3$. However, it is still an open question to show whether or not for $b\ge 6$ there are infinitely many squarefree integers such that every digit in base $b$ is 0 or 1.

In the particular case where we exclude one digit, say $\cs{Q}=\{a_0\}$, Maynard \cite{Maynard} showed that for bases $b\ge 10$, the set $\cs{W}_b$ contains infinitely many primes:
\begin{theorem}[\cite{Maynard}, Theorem 1.1]\label{th: MaynardMainResult}
  Given an integer $b\ge 10$, and some $a_0\in \{0,\ldots , b-1\}$, let 
  \[
\cs{M}_b\defeq \Big\{ \sum_{j=0}^{k}n_j b^j \; : \; n_j\in \{0,\ldots ,b-1\}\setminus \{a_0\}, \, k\ge 0\Big\}
\]
be the set of integers in base $b$ without $a_0$ in the digit expansion. Then, $\cs{M}_b$ contains infinitely many primes and 
\[
\#\{p\in \cs{M}_b(x) \; : \; p \mbox{ is prime}\}\asymp \frac{\# \cs{M}_b}{\log{x}}.
\]
\end{theorem}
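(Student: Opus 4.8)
The plan is to count primes in $\cs{M}_b$ by the Hardy--Littlewood circle method, exploiting the rigid product structure that the missing-digit condition forces on the associated Fourier transform. Write $x=b^k$ and set $D\defeq\{0,\ldots,b-1\}\setminus\{a_0\}$, so $\#D=b-1$. Since membership in $\cs{M}_b$ below $b^k$ is decided independently digit by digit, the exponential sum over $\cs{M}_b$ factors completely:
\[
\sum_{\substack{n\in\cs{M}_b\\ n<b^k}}\me(nt)=\prod_{j=0}^{k-1}\Big(\sum_{d\in D}\me\big(d\,b^{j}t\big)\Big).
\]
Writing $g(\theta)\defeq\sum_{d\in D}\me(d\theta)$ for the single-digit sum over the $b-1$ permitted digits, the whole problem is governed by the dilated product $\prod_{j}g(b^{j}t)$. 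By the circle-method identity, the number of primes in $\cs{M}_b(x)$ equals $\int_{0}^{1}\big(\sum_{n\le x}\Lambda(n)\me(nt)\big)\,\overline{\prod_{j}g(b^{j}t)}\,\dt$, and the task is to show this integral is $\asymp \#\cs{M}_b/\log x$.

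First I would split $[0,1]$ into major and minor arcs. On the major arcs --- thin neighbourhoods of rationals $a/q$ with $q$ bounded by a small power of $\log x$ --- the von Mangoldt sum is controlled by the prime number theorem in arithmetic progressions (Siegel--Walfisz), while $\prod_j g(b^j t)$ is evaluated through its digitwise structure; together these extract the expected main term of order $\#\cs{M}_b/\log x$. The delicate part is the minor arcs, where one must exhibit genuine cancellation. This rests on two independent inputs about the missing-digit transform that have to be combined: an $L^{\infty}$-type bound showing that $|\prod_j g(b^j t)|$ enjoys a power saving over the trivial size $(b-1)^k$ whenever $t$ avoids rationals of small denominator, and an $L^{1}$-type bound controlling $\int_0^1|\prod_j g(b^j t)|\,\dt$. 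The hypothesis $b\ge 10$ enters precisely here: it guarantees that the digit density $\log(b-1)/\log b$ is close enough to $1$ for the power saving to beat the loss incurred when decomposing $\Lambda$.

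The main obstacle --- and where essentially all the new ideas are needed --- is the minor-arc estimate, because a pointwise power-saving bound on $\prod_j g(b^j t)$ alone does not suffice to defeat the prime-counting sum. One is forced to interpolate between the $L^{\infty}$ decay and the $L^{1}$ mass of the transform and to feed this into bilinear (Type I / Type II) sum estimates for $\Lambda$ coming from a Vaughan- or Heath-Brown-type identity. The genuinely hard case is that of the Type II sums at frequencies lying near rationals of moderate denominator, where $\prod_j g(b^j t)$ exhibits only partial cancellation; handling these requires splitting the digit expansion into low- and high-order blocks and carrying out a self-contained harmonic-analytic study of how $g$ behaves under the dilations $\theta\mapsto b^{j}\theta$. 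Once the total minor-arc contribution is shown to be $o(\#\cs{M}_b/\log x)$, the major-arc main term delivers the stated order of magnitude, and positivity yields infinitely many primes in $\cs{M}_b$.
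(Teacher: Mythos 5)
Before anything else, note that Theorem \ref{th: MaynardMainResult} is not proved in this paper at all: it is quoted verbatim from Maynard \cite{Maynard} as background, and the paper only imports two of Maynard's auxiliary lemmas (the $L^{\infty}$ and $L^{1}$ Fourier bounds) to count \emph{powerfree} integers in $\cs{M}_b$, never primes. So there is no proof here to compare yours against, and your proposal must be measured against Maynard's original argument. Against that standard, your outline does correctly reproduce the architecture of the real proof: the complete factorization $\sum_{n\in\cs{M}_b,\, n<b^k}\me(nt)=\prod_{j=0}^{k-1}g(b^jt)$, major arcs handled by Siegel--Walfisz plus the digitwise structure, and minor arcs requiring an interpolation of $L^{\infty}$ decay against $L^{1}$ mass fed into a Vaughan/Heath-Brown-type decomposition of $\Lambda$.

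The genuine gap is that everything hard is contained in the step you label ``the main obstacle'' and then defer. A proof must actually produce the Type II estimate: bounds on bilinear sums of the shape $\sum_{m\sim M}\sum_{n\sim N}a_m b_n \1_{\cs{M}_b}(mn)$ for arbitrary bounded coefficients, which on the Fourier side demands large-sieve-type estimates for $\prod_j g(b^jt)$ sampled at many well-spaced frequencies, fractional-moment ($L^q$ with $q$ non-even and non-integral) bounds for the transform, and a hybrid analysis near rationals of moderate denominator; none of this follows formally from the pointwise power saving plus the $L^1$ bound you invoke, and it is precisely where the bulk of Maynard's paper lives. Your heuristic for the threshold --- that $b\ge 10$ makes $\log(b-1)/\log b$ close enough to $1$ --- is also not a proof ingredient: a naive implementation of your plan would at best work for very large $b$, and reaching $b=10$ required Maynard's refined combinatorial and numerical estimates. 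This contrast is in fact the whole point of the present paper: counting $k^{th}$ powerfree integers needs only the Type I information \eqref{eq: Ad(x)-A(x)/d}, which $L^{1}$/$L^{\infty}$ Fourier bounds deliver directly via Theorem \ref{th: Main Theorem}, whereas counting primes needs the Type II input that your sketch names but does not supply.
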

As a final application of our method, we use the $L^{1}$ and $L^{\infty}$ bounds from Maynard \cite{MaynardDigitsOfPrimes,Maynard} to prove a weaker version of Theorem \ref{th: ErdosMauduitSarkozy} that does not rely on equidistribution estimates:
\begin{theorem}\label{th: SquarefreeMissingDigits}
Let $\cs{M}_b$ be as in Theorem \ref{th: MaynardMainResult}, and let $\cs{M}_b^{*}=\{a\in \cs{M}_b \; : \; (a,b)=1\}$. Then, for $b\ge 5$ or $b=4$ with $a_0=0,3$, $\cs{M}_b^{*}$ contains infinitely many squarefree integers, and 
\[
  \sum_{\substack{n\in\cs{M}_b^{*}(x) \\n \text{ is squarefree} }}1\sim \frac{\#\cs{M}_b^{*}(x)}{\zeta(2)}\prod_{p|b}\Big(1-\frac{1}{p^2}\Big)^{-1}.
  \]
  If $b=3$ or $b=4$ with $a_0=1,2$, then $\cs{M}_b^{*}$ contains infinitely many cubefree integers, and 
\[
  \sum_{\substack{n\in\cs{M}_b^{*}(x) \\n \text{ is cubefree} }}1\sim \frac{\#\cs{M}_b^{*}(x)}{\zeta(3)}\prod_{p|b}\Big(1-\frac{1}{p^3}\Big)^{-1}.
  \]
\end{theorem}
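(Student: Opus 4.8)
The plan is to apply Theorem \ref{th: Main Theorem} to the set $\cs{M}_b^{*}$, taking $k=2$ in the squarefree regime and $k=3$ in the cubefree regime. The starting point is the multiplicative structure of the Fourier transform. Writing $D=\{0,\ldots,b-1\}\setminus\{a_0\}$ for the set of admissible digits and $g(\alpha)=\sum_{d\in D}\me(d\alpha)$ for the associated digit character sum, an integer $x=b^m$ gives
\[
S_x(t)=\sum_{n\in\cs{M}_b(x)}\me(nt)=\prod_{j=0}^{m-1}g(b^j t),\qquad F_x(t)=\prod_{j=0}^{m-1}\frac{|g(b^j t)|}{b-1}.
\]
Since each factor $|g(b^j t)|/(b-1)\le 1$, appending digits can only shrink $F$, so $F_y(t)\ll F_x(t)$ whenever $x\le y$ are powers of $b$; this establishes hypothesis (iii) at once. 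Passing from $\cs{M}_b$ to $\cs{M}_b^{*}$ only imposes the coprimality condition $(n,b)=1$, which I incorporate at the sieving stage rather than at the level of $F_x$.

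For the $L^{\infty}$ estimate (i), I would invoke the major–minor arc analysis of Maynard \cite{MaynardDigitsOfPrimes,Maynard}: at a rational $a/d$ with $0<a<d$ and $d$ of size at most a fixed power of $\log x$, the digit sum $g(b^j\,a/d)$ stays bounded away from its maximal value $b-1$ for a positive proportion of the scales $j$, which produces the required exponential decay $F_x(a/d)\ll\exp(-c\,\log x/\log d)$. The decreasing property together with this rational–point bound are the soft inputs.

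The crux is hypothesis (ii). The bound $\int_{0}^{1}F_x(t)\dt\ll x^{-1/k-\de}$ is equivalent, after dividing by $(b-1)^m\asymp\#\cs{M}_b(x)$, to a geometric–mean estimate for $\prod_{j=0}^{m-1}|g(b^j t)|$, and Maynard's Fourier $L^1$ bounds \cite{MaynardDigitsOfPrimes,Maynard} supply exactly this, of the shape $\int_{0}^{1}\prod_{j=0}^{m-1}|g(b^j t)|\dt\ll (b-1)^{\kappa m}$ for an explicit constant $\kappa=\kappa(b,a_0)<1$; equivalently $\int_{0}^{1}F_x(t)\dt\ll x^{-\be}$ with $\be=(1-\kappa)\log(b-1)/\log b$, while the companion bound $\int_{0}^{1}|F_x'(t)|\dt\ll x^{1-\be}$ follows because differentiation inserts a factor $\sum_j b^j\ll x$ without destroying the product structure. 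Hypothesis (ii) then holds with a given $k$ precisely when $\be>1/k$, and this is where the case division originates: one must verify that $\be>1/2$ exactly for $b\ge 5$, and for $b=4$ only when $a_0=0,3$, whereas in the remaining cases $b=3$ and $b=4$ with $a_0=1,2$ one has only $\be>1/3$. The reflection $a_0\mapsto b-1-a_0$ sends $g$ to $\me((b-1)\alpha)\overline{g}$ and hence leaves $|g|$, $F_x$, and $\be$ unchanged, which is what collapses the four base-$4$ subcases into the two stated families. Extracting the sharp threshold $\be>1/k$ in each case from Maynard's estimates is the main obstacle; note that the elementary Cauchy–Schwarz bound of the Remark gives only $\be=\tf12\log(b-1)/\log b<\tf12$, so the genuine cancellation in Maynard's $L^1$ bound is indispensable for reaching $k=2$.

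Finally, with (i)–(iii) in hand, Theorem \ref{th: Main Theorem} yields infinitely many $k^{th}$ powerfree integers in $\cs{M}_b^{*}$, and it remains to recover the Euler factor $\prod_{p\mid b}(1-p^{-k})^{-1}$. This is forced by the coprimality: detecting $k^{th}$ powerfreeness through $\sum_{d^k\mid n}\mu(d)$ and noting that $(n,b)=1$ makes $(d,b)=1$ automatic, the Möbius sum in the main term runs only over $d$ coprime to $b$, so
\[
\sum_{(d,b)=1}\frac{\mu(d)}{d^k}=\prod_{p\nmid b}\Big(1-\frac{1}{p^k}\Big)=\frac{1}{\zeta(k)}\prod_{p\mid b}\Big(1-\frac{1}{p^k}\Big)^{-1},
\]
which is exactly the claimed constant. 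Taking $k=2$ for $b\ge 5$ and for $b=4$ with $a_0\in\{0,3\}$, and $k=3$ for $b=3$ and for $b=4$ with $a_0\in\{1,2\}$, produces both asymptotics.
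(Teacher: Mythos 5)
Your architecture is exactly the paper's: feed Maynard's three Fourier inputs (the monotonicity $F_y\le F_x$ for powers of $b$ coming from the product structure, the $L^{\infty}$ bound at rationals $a/d$ with $(d,b)=1$, and the $L^{1}$ bound) into Theorem \ref{th: Main Theorem}, handle the passage from $\cs{M}_b$ to $\cs{M}_b^{*}$ at the sieving stage by M\"obius inversion over divisors of $b$, and extract the Euler factor from $\sum_{(d,b)=1}\mu(d)d^{-k}=\zeta(k)^{-1}\prod_{p\mid b}(1-p^{-k})^{-1}$. Your reflection observation that $a_0\mapsto b-1-a_0$ preserves $|g|$, hence the exponent, is correct and neatly explains why the four base-$4$ subcases collapse into two (it is visible in, but not remarked on in, the paper). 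Two small technical points you gloss over are benign: the identity $S_x(t)=\prod_{j}g(b^jt)$ is literally correct only for $a_0\neq 0$ (for $a_0=0$ shorter digit strings contribute, which Maynard's estimates accommodate), and the derivative bound picks up an extra factor $\log x$ from the product rule, which is absorbed by the strict inequality $\beta>1/k$.

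The genuine gap is the step you yourself flag as ``the main obstacle'' and then do not resolve: the entire content of the theorem is the case split, and that split rests on knowing the numerical value of the $L^1$ exponent $\beta$ (the paper's $\alpha=1-\log\lambda/\log b$, with $\lambda$ the largest eigenvalue of Maynard's explicit $b^3\times b^3$ matrix). This value is \emph{not} available off-the-shelf from \cite{MaynardDigitsOfPrimes,Maynard}: Maynard's Lemma 10.2 gives the bound in terms of $\lambda$ for general $b$, but his numerical evaluation of $\lambda$ is carried out for base $10$, which is what his prime-counting application requires; for $3\le b\le 9$ no published value of the exponent exists, and nothing in your write-up produces one. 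The paper closes precisely this gap by adapting Maynard's Mathematica computation to the small bases and tabulating $\alpha$ --- e.g.\ $\alpha\approx 0.51110$ for $b=4$, $a_0\in\{0,3\}$ versus $\alpha\approx 0.45387$ for $b=4$, $a_0\in\{1,2\}$, and $\alpha\approx 0.36$--$0.38$ for $b=3$ --- from which the thresholds $\alpha>\tfrac12$ (squarefree cases) and $\alpha>\tfrac13$ (cubefree cases) are read off, with $b\ge 10$ covered by Maynard's own numerics. Without carrying out (or citing) such a computation, your argument proves only the conditional mechanism ``$\beta>1/k$ implies infinitely many $k$th powerfree elements of $\cs{M}_b^{*}$,'' not the stated theorem; in particular nothing in your proposal could distinguish $b=4$, $a_0=0$ from $b=4$, $a_0=1$, which is exactly what the statement asserts.
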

\begin{remark}
In base $2$, if the missing digit is $1$, then $\cs{M}_2=\{0\}$, so trivially $\cs{M}_2$ does not contain infinitely many squarefree integers. If the missing digit is $0$, then $\cs{M}_2=\{2^n-1\; : \; n\in\bb{Z}^{+}\}$, so that $\cs{M}_2(x)\asymp \log{x}$. In this case, our methods are not strong enough to deal with $\cs{M}_2$ because by \eqref{eq: LowerBoundL1}, it is not possible to have a bound of the form 
\[
\int_{0}^{1}|F_x(t)|\dt \ll \frac{1}{x^{\f{1}{k}+\de}}
\]
for some $k\in\bb{Z}$. Moreover, we don't expect that an exponential sum type approach would work for this, because 
\[
\Big|\sum_{n\le \sqrt{x}} \me((2^n-1)x)\Big|=\Big|\sum_{n\le \sqrt{x}} \me(2^nx)\Big|;
\]
this essentially shows that the Fourier transform can't differentiate between the sets $\{2^n-1 \; : \; n\in\bb{Z}\}$ and $\{2^n \; : \; n\in\bb{Z}\}$. It is still an open problem to show whether or not there are infinitely many squarefree integers of the form $2^n-1$.
\end{remark}
\section{Proof of the main result}
\subsection{Technical Lemmas}
By the orthogonality relations, note that
\begin{align*}
  \sum_{\substack{n\in\cs{A}(x) \\ d|n}}1={} &   \sum_{0\le a\lt d}\f{1}{d}\sum_{n\in\cs{A}(x)}\me\Big(\f{an}{d}\Big) \\
  ={} & \sum_{0\le a\lt d}\f{1}{d}S_x\Big(\f{a}{d}\Big) \\
  ={} & \f{\#\cs{A}(x)}{d} + \sum_{0\lt a\lt d}\f{1}{d}S_x\Big(\f{a}{d}\Big).
\end{align*}
Therefore, by the triangle inequality and the definition of $F_x(t)$, we have
\begin{equation}\label{eq: Ad(x)-A(x)/d}
  \Big|\sum_{\substack{n\in\cs{A}(x) \\ d|n}}1-\f{1}{d}\#\cs{A}(x)\Big|\le \frac{\#\cs{A}(x)}{d}\sum_{0\lt a\lt d}F_x\Big(\f{a}{d}\Big).
\end{equation}
What this equation tells us is that in order to understand $\cs{A}$ in arithmetic progressions, it suffices to study its Fourier transform. In other words, type I information of a set is interlaced with its Fourier transform. Now, we are interested in counting the number of integers in $\cs{A}$ that are $k^{th}$ powerfree for some $k\ge 2$. We then have
\begin{align*}
  \sum_{\substack{n\in\cs{A}(x) \\ n \text{ is } k^{\text{th}} \text{ powerfree}}}1={} & \sum_{n\in\cs{A}(x)} \sum_{q^k|n}\mu(q) \\
  ={} & \sum_{q\le x^{\f{1}{k}}} \mu(q)\sum_{\substack{n\in\cs{A}(x) \\ q^k|n}}1 \\
  ={} & \#\cs{A}(x)\sum_{q\le x^{\f{1}{k}}} \frac{\mu(q)}{q^k}+\sum_{q\le x^{\f{1}{k}}}\mu(q)\Big(\sum_{\substack{n\in\cs{A}(x) \\ q^k|n}}1-\frac{\#\cs{A}(x)}{q^k}\Big) \\
  ={}& \frac{\#\cs{A}(x)}{\ze(k)}+o\Big(\frac{\#\cs{A}(x)}{x^{\f{1}{k}}}\Big)+O\Big(\#\cs{A}(x) \sum_{q\le x^{\f{1}{k}}} \frac{1}{q^k}\sum_{0\lt a \lt q^k} F_x\Big(\frac{a}{q^k}\Big)\Big),
\end{align*}
where the last equality follows from \eqref{eq: Ad(x)-A(x)/d} with $d=q^k$ and the classical result $\sum_{n\ge 1}\mf{\mu(n)}{n^k}=\mf{1}{\zeta(k)}$. This immediately proves the following result:
\begin{lem}\label{lem: DoubleSumFourierTransform}
  Suppose that $\cs{A}$ is a set of integers such that its normalized Fourier transform $F_x$ satisfies 
  \[
  \sum_{q\le x^{\f{1}{k}}} \frac{1}{q^k}\sum_{0\lt a \lt q^k}F_x\Big(\frac{a}{q^k}\Big)=o(1) 
  \]
  as $x\to\infty$ for some positive integer $k\ge 2$. Then $\cs{A}$ contains infinitely many $k^{th}$ powerfree integers and 
  \[
  \sum_{\substack{n\in\cs{A}(x) \\ n \text{ is } k^{\text{th}} \text{ powerfree}}}1\sim \frac{\#\cs{A}(x)}{\ze(k)} 
  \]
  as $x\to\infty$.
\end{lem}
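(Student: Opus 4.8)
The plan is to read the conclusion straight off the identity established in the display immediately preceding the statement, so that the whole argument reduces to checking that the two error terms there are of smaller order than the main term. Recall that the computation above the lemma gives, for every $x$,
\[
\sum_{\substack{n\in\cs{A}(x) \\ n \text{ is } k^{\text{th}} \text{ powerfree}}}1 = \frac{\#\cs{A}(x)}{\ze(k)}+o\Big(\frac{\#\cs{A}(x)}{x^{\f{1}{k}}}\Big)+O\Big(\#\cs{A}(x) \sum_{q\le x^{\f{1}{k}}} \frac{1}{q^k}\sum_{0\lt a \lt q^k} F_x\Big(\frac{a}{q^k}\Big)\Big).
\]
First I would record that the main term $\#\cs{A}(x)/\ze(k)$ is a fixed positive multiple of $\#\cs{A}(x)$, since for $k\ge 2$ the value $\ze(k)$ is a finite constant depending only on $k$.

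Next I would bound each error term against this main term. The term $o(\#\cs{A}(x)/x^{\f{1}{k}})$ is plainly $o(\#\cs{A}(x))$ because $x^{\f{1}{k}}\to\infty$. For the remaining term, the hypothesis of the lemma is exactly the assertion that the double sum $\sum_{q\le x^{\f{1}{k}}} q^{-k}\sum_{0\lt a\lt q^k}F_x(a/q^k)$ is $o(1)$ as $x\to\infty$; hence that contribution is $O(\#\cs{A}(x)\cdot o(1))=o(\#\cs{A}(x))$. Combining, both error terms are $o(\#\cs{A}(x))$, and dividing the identity by $\#\cs{A}(x)/\ze(k)$ yields the claimed asymptotic.

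Finally, for the infinitude statement I would note that $\#\cs{A}(x)\to\infty$ (the hypothesis is only meaningful for an infinite set $\cs{A}$), so the quantity $\#\cs{A}(x)/\ze(k)$ tends to infinity; by the asymptotic just proved, the count of $k^{th}$ powerfree integers in $\cs{A}(x)$ is therefore unbounded, which forces $\cs{A}$ to contain infinitely many of them.

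I do not anticipate any genuine obstacle within the lemma itself: all the arithmetic content — the Möbius inversion $\sum_{q^k\mid n}\mu(q)$, the insertion of \eqref{eq: Ad(x)-A(x)/d} with $d=q^k$, and the tail estimate $\sum_{n\ge 1}\mu(n)/n^k=1/\ze(k)$ — has already been carried out in the display above, so the lemma merely repackages that computation under a single clean hypothesis. The only point that demands a moment's care is the comparison of orders, namely confirming that the hypothesized $o(1)$ bound on the double sum is precisely what is needed to absorb the $O$-term into $o(\#\cs{A}(x))$. The real difficulty is thereby deferred to \emph{verifying} this hypothesis for concrete sets $\cs{A}$, which is exactly the task of Theorem \ref{th: Main Theorem}.
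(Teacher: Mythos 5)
Your proposal is correct and follows essentially the same route as the paper: the paper's proof of this lemma is precisely the Möbius-inversion computation displayed immediately before it (the paper says it ``immediately proves'' the lemma), and your argument just makes explicit the absorption of the two error terms into $o(\#\cs{A}(x))$ and the passage from the asymptotic to infinitude. No gaps; the observation that the hypothesis forces $\cs{A}$ to be infinite, so that $\#\cs{A}(x)\to\infty$, is the same implicit step the paper takes.
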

In order to bound this double sum associated to the Fourier transform, we mainly follow the ideas of Maynard \cite{Maynard} from his work on primes with missing digits. We start with the following easy lemma:
\begin{lem}\label{lem: Large sieve type lemma}
  Let $f\in C^1(\bb{R},\mathbb{C})$ be a periodic function with period $1$. Then, for any integer $k\ge 1$, 
\[
\sum_{0\lt a\lt q^k}\bigg| f\Big(\frac{a}{q^k}\Big)\bigg|\ll q^k\int_{0}^{1}|f(t)|\dt + \int_{0}^{1}|f'(t)|\dt.
\]
\end{lem}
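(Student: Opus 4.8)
The plan is to read the left-hand side as a Riemann-type sum and to compare it against the two integrals on the right via the fundamental theorem of calculus. Writing $N\defeq q^k$ for brevity, I would partition $[0,1]$ into the subintervals $I_a\defeq [\f{a}{N},\f{a+1}{N}]$. The guiding idea is that the point value $f(\f{a}{N})$ cannot differ too much from the local average of $f$ over $I_a$: the $L^1$ mass of $f$ on $I_a$ controls $\f{1}{N}|f(\f{a}{N})|$ up to an error governed by how much $f$ varies across $I_a$, and that variation is exactly what $\int_{I_a}|f'|$ measures.

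Concretely, for a fixed $a$ and any $t\in I_a$, the fundamental theorem of calculus gives $f(\f{a}{N})=f(t)-\int_{a/N}^{t}f'(s)\ds$, whence
\[
\Big|f\Big(\f{a}{N}\Big)\Big|\le |f(t)|+\int_{I_a}|f'(s)|\ds.
\]
The next step is to average this inequality over $t\in I_a$, that is, integrate in $t$ over the interval of length $\f{1}{N}$ and then multiply through by $N$, which yields
\[
\Big|f\Big(\f{a}{N}\Big)\Big|\le N\int_{I_a}|f(t)|\dt+\int_{I_a}|f'(s)|\ds.
\]
Finally I would sum over $a=1,\ldots,N-1$. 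Since the intervals $I_a$ have disjoint interiors and are all contained in $[0,1]$, the integrals telescope into integrals over $[0,1]$, giving
\[
\sum_{0\lt a\lt q^k}\Big|f\Big(\f{a}{q^k}\Big)\Big|\le q^k\int_{0}^{1}|f(t)|\dt+\int_{0}^{1}|f'(s)|\ds,
\]
which is the claimed bound (indeed with implied constant $1$).

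There is no serious obstacle here; the argument is a routine discretization estimate, and the $C^1$ hypothesis is precisely what makes the fundamental theorem of calculus applicable. The only point requiring a little care is bookkeeping at the endpoints: one must choose the assignment $a\mapsto I_a$ so that the subintervals stay inside $[0,1]$ and do not overlap, so that summing the local bounds produces genuine integrals over $[0,1]$ rather than overcounting. The assumed periodicity of $f$ is convenient here, since it lets one treat the boundary terms symmetrically and identify values at $0$ and $1$ without affecting the estimate.
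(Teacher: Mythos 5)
Your proof is correct and is essentially the same argument as the paper's: apply the fundamental theorem of calculus, average over an interval of length $q^{-k}$ attached to each point $a/q^k$, and sum over the disjoint intervals inside $[0,1]$. The only cosmetic difference is that you use the one-sided intervals $[\tfrac{a}{q^k},\tfrac{a+1}{q^k}]$ where the paper uses intervals centered at $\tfrac{a}{q^k}$; both yield the bound with implied constant $1$.
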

\begin{proof}
  By the Fundamental Theorem of Calculus, 
  \[
  -f\Big(\frac{a}{q^k}\Big)=-f(t)+\int_{\frac{a}{q^k}}^{t}f'(x)\dx,
  \]
  so that, by the triangle inequality, we have
  \begin{equation}\label{eq: EquationToIntegrate}
    \bigg| f\Big(\frac{a}{q^k}\Big)\bigg|\le |f(t)|+\int_{\frac{a}{q^k}}^{t}|f'(x)|\dx.
  \end{equation}
  Now, let $\delta\gt 0$. Then, integrating \eqref{eq: EquationToIntegrate} with respect to $t$ over the interval 
  \[
  I\defeq\Big(\frac{a}{q^k}-\frac{\delta}{2},\frac{a}{q^k}+\frac{\delta}{2}\Big),
  \]
  shows that
  \[
   \delta \bigg| f\Big(\frac{a}{q^k}\Big)\bigg|\le \int_{I}|f(t)|\dt +\int_{I}\int_{\frac{a}{q^k}}^{t}|f'(x)|\dx\dt,
  \]
  and since $x\in\Big(\mfrac{a}{q^k},t\Big)$, then $x\in I$, so that 
  \begin{align*}
    \delta \bigg| f\Big(\frac{a}{q^k}\Big)\bigg|\le{} & \int_{I}|f(t)|\dt +\int_{I}\int_{I}|f'(x)|\dx\dt \\
    ={} & \int_{I}|f(t)|\dt +\delta\int_{I}|f'(x)|\dx.
  \end{align*}
  Dividing by $\delta$ allows us to obtain 
  \begin{equation}\label{eq: BoundFor f(a/q^k)}
    \bigg| f\Big(\frac{a}{q^k}\Big)\bigg|\le \frac{1}{\delta}\int_{I}|f(t)|\dt + \int_{I}|f'(t)| \dt.
  \end{equation}
  Now, let $\delta\defeq q^{-k}$, and observe that with this choice of $\delta$, the intervals $I$ don't overlap for $0\lt a\lt q^k$. Moreover, these $I$ are contained in the interval $(0,1)$, and so summing \eqref{eq: BoundFor f(a/q^k)} over $0\lt a\lt q^k$ shows that 
  \[
  \sum_{0\lt a \lt q^k}\bigg| f\Big(\frac{a}{q^k}\Big)\bigg|\le q^k\int_{0}^1|f(t)|\dt + \int_{0}^1|f'(t)| \dt. \qedhere
  \]
\end{proof}
\subsection{Proof of Theorem \ref{th: Main Theorem}}
For convenience to the reader we restate the theorem here: 
\main*
\begin{proof}
  By Lemma \ref{lem: DoubleSumFourierTransform}, it suffices to show that 
   \[
  \sum_{q\le x^{\f{1}{k}}} \frac{1}{q^k}\sum_{0\lt a \lt q^k}F_x\Big(\frac{a}{q^k}\Big)=o(1)
  \]
  as $x\to\infty$. Moreover, without loss of generality, it suffices to prove the result for $x=b^m$ as $m\to\infty$. The main idea is to write  
  \[
  \sum_{q\le x^{\f{1}{k}}} \frac{1}{q^k}\sum_{0\lt a \lt q^k}F_x\Big(\frac{a}{q^k}\Big)=S_1+S_2,
  \]
  where 
  \[
  S_1\defeq \sum_{q\le(\log{x})^{\f{B}{k}}} \frac{1}{q^k}\sum_{0\lt a \lt q^k}F_x\Big(\frac{a}{q^k}\Big),
  \]
  and 
  \[
  S_2 \defeq \sum_{(\log{x})^{\f{B}{k}}\lt q\le x^{\f{1}{k}}} \frac{1}{q^k}\sum_{0\lt a \lt q^k}F_x\Big(\frac{a}{q^k}\Big).
  \]
  In order to bound $S_1$, we employ the $L^{\infty}$ estimate, and for $S_2$, using a large sieve type estimate, we show that the $L^{1}$ estimate is sufficient to obtain the required bound. With these ideas in mind, we begin by bounding $S_1$: using \eqref{eq: L^infinity estimate}, we obtain for some absolute constant $c$ 
  \[
  S_1\ll \sum_{q\le (\log{x})^{\f{B}{k}}} \exp\Big(-\frac{c}{k}\frac{\log{x}}{\log{q}}\Big)\ll  (\log{x})^{\f{B}{k}}\exp\Big(-\frac{c}{B}\frac{\log{x}}{\log{\log{x}}}\Big)=o(1).
  \]
  To bound $S_2$, we employ Lemma \ref{lem: Large sieve type lemma} to see that for any $u$, 
  \[
  \frac{1}{q^k}\sum_{0\lt a\lt q^k}F_u\Big(\frac{a}{q^k}\Big) \ll \int_{0}^{1}F_u(t)\dt + \f{1}{q^k}\int_{0}^{1}|F_u'(t)|\dt.
  \]
  This together with \eqref{eq: L^1 bounds} shows that 
  \[
   \frac{1}{q^k}\sum_{0\lt a \lt q^k}F_u\Big(\frac{a}{q^k}\Big) \ll \frac{1}{u^{\f{1}{k}+\de}}+ \f{u^{1-\f{1}{k}-\de}}{q^k}.
  \]
  We now choose $u=b^{l}$ maximally subject to $u\le x$ and $u\le q^k$, so that by \eqref{eq: Fourier transform decreasing in x}, we have 
  \[
   \frac{1}{q^k}\sum_{0\lt a \lt q^k}F_x\Big(\frac{a}{q^k}\Big) \ll \frac{1}{x^{\f{1}{k}+\de}}+ \f{1}{q^{1+k\de}}.
  \]
  Finally, summing over $(\log{x})^{\f{B}{k}}\lt q\le x^{\f{1}{k}}$ shows that 
  \begin{align*}
    S_2\ll{} & \sum_{(\log{x})^{\f{B}{k}}\lt q\le x^{\f{1}{k}}} \Big(\frac{1}{x^{\f{1}{k}+\de}}+ \f{1}{q^{1+k\de}}\Big)\\
    \ll{} & \frac{1}{x^{\de}}+\int_{(\log{x})^{\f{B}{k}}}^{x^{\f1k}}\frac{\dt}{t^{1+k\de}} \\
    ={}&\frac{1}{x^{\de}}+\frac{1}{k\de}\Big(\f{1}{(\log{x})^{B\de}}-\f{1}{x^{\de}} \Big) \\ 
    ={}& o(1). \qedhere
  \end{align*}
\end{proof}
\begin{exmp}
To illustrate Theorem \ref{th: Main Theorem}, let us consider the simplest example when $\cs{A}=\bb{Z}^{+}\cup \{0\}$. In this case, 
\[
F_x(t)=\frac{1}{x}\Big| \sum_{n=0}^{x}\me(nt) \Big| = \frac{1}{x}\Big| \frac{\me(xt)-1}{\me(t)-1}\Big|=\frac{1}{x}\Big| \frac{\sin(\pi xt)}{\sin(\pi t)}\Big| 
\]
is essentially the normalized \emph{Dirichlet kernel}. We now show that $F_x(t)$ satisfies the hypotheses of Theorem \ref{th: Main Theorem}:
\begin{enumerate}[{\rm (i)}]
    \item  Let $a,d\in \bb{Z}^{+}$ with $a\lt d$ and $d\ge 2$. Let $\norm{\cdot}$ denote the distance to the nearest integer function. Observe that for any $u \in\bb{R}$, since $|\sin(\pi u)|\ge 2\norm{u}$, then  
        \[
        F_x\Big(\frac{a}{d}\Big)\le \frac{1}{2x\norm{\f{a}{d}}} \le \frac{d}{x}.
        \]
        Now, since $\log{x}\ll x^{\ve}$ for any $\ve\gt 0$, it is clear that if we assume that $d\le \log{x}$, we have 
  \[
  F_x\Big(\frac{a}{d}\Big)\le \frac{\log{x}}{x} \ll \frac{x^{1-\tfrac{1}{2\log{2}}}}{x}\le \f{x^{1-\tfrac{1}{2\log{d}}}}{x}=\exp{\Big(-\frac{\log{x}}{2\log{d}}\Big)}. 
  \]
    \item By symmetry, observe that 
    \begin{equation}\label{eq: Symmetry}
      \int_{0}^{1}F_x(t)\dt =2\int_{0}^{\f12}F_x(t)\dt. 
    \end{equation}
    Now, again using the inequality $|\sin(\pi x)|\ge 2\norm{x}$, it is clear that 
    \[
    F_x(t)\le \min\Big\{1,\frac{1}{2x\norm{t}}\Big\}. 
    \]
    Hence,
    \begin{align*}
      \int_{0}^{\f12}F_x(t)\dt={} & \int_{0}^{\f{1}{x}}F_x(t)\dt+\int_{\f{1}{x}}^{\f12}F_x(t)\dt \\
      \le{} &  \int_{0}^{\f{1}{x}}\dt+\int_{\f{1}{x}}^{\f12}\frac{1}{2xt}\dt \\
      \ll{} & \frac{\log{x}}{x}.
    \end{align*}
    This together with \eqref{eq: Symmetry} shows that 
  \begin{equation*}
  \int_{0}^{1}F_x(t)\dt \ll \frac{1}{x^{1-\ve}}
  \end{equation*}
  for any $\ve\gt 0$. A completely analogous argument allows us to obtain the same bound for $\norm{F_x'(t)}_1$, but multiplied by $x$, showing that $F_x(t)$ satisfies \eqref{eq: L^1 bounds} for any $k \ge 2$.
  \item Observe that for any $m\in\bb{Z}^{+}$, we have 
  \[
  |\sin(2^{m+1}\pi t)|=2|\sin(2^{m}\pi t)\cos(2^{m}\pi t)|\le 2|\sin(2^{m}\pi t)|.
  \]
  Therefore,
  \[
  \frac{1}{2^{m+1}}\Big| \frac{\sin(2^{m+1}\pi t)}{\sin(\pi t)} \Big|\le \frac{1}{2^{m}}\Big| \frac{\sin(2^{m}\pi t)}{\sin(\pi t)} \Big|.
  \]
  Applying this inequality inductively, shows that for any $x,y$ powers of $2$ with $x\le y$, we have 
  \begin{equation*}
  F_y(t)\ll F_x(t).
  \end{equation*}
  \end{enumerate}
  This shows that $F_x(t)$ satisfies all the assumptions of Theorem \ref{th: Main Theorem} for any $k\ge 2$, and we conclude that the set of positive integers contains infinitely many $k^{th}$ powerfree integers, and we recover the classic result 
  \[
  \sum_{\substack{n\le x \\ n \text{ is } k^{\text{th}} \text{ powerfree}}}1\sim \frac{x}{\ze(k)}.
  \]
\end{exmp}
\section{Cubefree palindromes}
Let $\cs{P}_b$ denote the set of non negative palindromes in base $b\ge 2$. It will be convenient to consider the set of palindromes with exactly $l$ digits. For $l\ge 2$, we define
\[
\cs{B}_l\defeq \{n\in\cs{P}_b \; : \; b^{l-1}\le n \lt b^l\}.
\]
For convenience, we define $\cs{B}_1=\{0,\ldots,b-1\}$. We also define a Fourier transform on $\cs{B}_l$:
\[
f_l(t)\defeq \sum_{n\in\cs{B}_l}\me(nt).
\]
First, let's see how we can explicitly compute $f_{2l}(t)$: note that $n\in \mc{B}_{2l}$ if and only if $n=n_0+n_1b+\ldots + n_{2l-1}b^{2l-1}$ for some $n_i\in \{0,\ldots, b-1\}$ with $n_i=n_{2l-i}$ for $i=0,\ldots, l-1$ and $n_0\ge 1$. Then,
\begin{align*}
  f_{2l}(t)={} & \sum_{\substack{n_0,\ldots, n_{2l-1}\in\{0,\ldots,b-1\} \\ n_0\ge 1 \\ n_i=n_{2l-1-i}, \; 0\le i\le 2l-1 }} \me\Big( \sum_{i=0}^{2l-1}n_i b^i\Big)\\
  ={} & \sum_{n_0=1}^{b-1}\me((1+b^{2l-1})n_ot)\prod_{i=1}^{l-1}\sum_{n_i=0}^{b-1}\me((b^i+b^{2l-1-i})n_it) \\
  ={} & (f_1((1+b^{2l-1})t)-1)\prod_{i=1}^{l-1}f_1((b^i+b^{2l-1-i})t).
\end{align*}
Similarly,
\begin{equation}\label{eq: ProductFormula f odd}
f_{2l+1}(t)=(f_1((1+b^{2l})t)-1)f_1(b^lt)\prod_{i=1}^{l-1}f_1((b^i+b^{2l-i})t).
\end{equation}
This shows that the expressions for $f_j(t)$ are very explicit, because 
\[
f_1(t)=\sum_{0\le n\lt b} \me(nt)=\frac{\me(bt)-1}{\me(t)-1}.
\]
Now, note that since $f_1(0)=b$, then the product formula $\eqref{eq: ProductFormula f odd}$ shows that 
\[
\cs{B}_{2l+1}=f_{2l+1}(0)=(b-1)b^l.
\]
Since the expression for $f_l(t)$ differs according to the parity of $l$, for convenience we will work only with palindromes with an odd number of digits, so we let 
\[
\cs{A}\defeq \{n\in \cs{P}_b \; : \; n \mbox{ has an odd number of digits}\}=\bigcup_{l=0}^{\infty}\cs{B}_{2l+1}.
\]
Then, if $S_x(t)=\sum_{n\in\cs{A}(x)}\me(nt)$ denotes the Fourier transform of $\cs{A}(x)$, and if $x=b^{2L+1}$, we then have
\[
S_x(t)=\sum_{l=0}^{L}f_{2l+1}(t).
\]
Following \cite{TuxanidyPanario}, we define 
\[
\Phi_l(t)\defeq \prod_{i=1}^{l-1}f_1((b^i+b^{2l-i})t),
\]
and we observe that 
\begin{equation}\label{eq: |S(t)| less than a sum over Phi}
  |S_x(t)|\le \sum_{l=0}^{L}|f_{2l+1}(t)| \le b^2\sum_{l=0}^{L}|\Phi_l(t)|,
\end{equation}
where we define $\Phi_0(t)=\Phi_1(t)=1$. Hence, in order to study the Fourier transform of the palindromes, it essentially suffices to study $\Phi_l(t)$. Moreover, since it is convenient to work with a normalized version, we define
\[
\tilde{\Phi}_l(t)\defeq \frac{\Phi_l(t)}{\#\cs{B}_{2l+1}}=\frac{\Phi_l(t)}{(b-1)b^l}.
\]
We will see that $\tilde{\Phi}_l(t)$ will play the role of $F_x(t)$ from the previous sections, because it will satisfy very similar properties to the ones in the assumptions of Theorem \ref{th: Main Theorem}. It will also later become clear why we work with $\Phi_l(t)$ instead of $f_{2l+1}(t)$. We start with the $L^{\infty}$ bound (the analogous of \eqref{eq: L^infinity estimate}) first proved by Col \cite{Col}, and then improved by Tuxanidy and Panario \cite{TuxanidyPanario}:
\begin{lem}[\cite{TuxanidyPanario}, Proposition 6.2]\label{lem: L Infinity bound}
Let $a,d,m,l$ be integers with $d\ge 2$, $(d,a(b^3-b))=1$. Then, 
\[
\Big|\tilde{\Phi}_l\Big(\frac{a}{d}+\frac{m}{b^3-b}\Big)\Big| \ll_{b} \exp\Big(-c_b \frac{l}{\log{d}}\Big),
\]
where $c_b\gt 0$ is some absolute constant depending only on $b$.
\end{lem}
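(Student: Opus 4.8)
The plan is to regard $\tilde{\Phi}_l$ as a normalized product of $l-1$ Dirichlet--kernel factors and to show that a positive proportion of them---of density $\gg_b 1/\log d$---is bounded away from its maximal size $b$. Writing $t=\frac{a}{d}+\frac{m}{b^3-b}$ and $u_i\defeq (b^i+b^{2l-i})t$, and using $f_1(0)=b$ together with $\#\cs{B}_{2l+1}=(b-1)b^l$, we have
\[
\big|\tilde{\Phi}_l(t)\big|=\frac{1}{b(b-1)}\prod_{i=1}^{l-1}\frac{|f_1(u_i)|}{b}.
\]
The argument then rests on two ingredients: (A) a pointwise Gaussian-type bound $|f_1(u)|/b\le \exp(-c_b\norm{u}^2)$, where $\norm{\cdot}$ denotes the distance to the nearest integer; and (B) a counting statement that at least $\gg_b l/\log d$ of the indices $i$ satisfy $\norm{u_i}\ge \eta$ for some constant $\eta=\eta(b)>0$. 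It is essential that (A) be quadratic rather than the cruder $|f_1(u)|/b\le 1/(2b\norm{u})$, since the threshold $\eta$ produced below is too small for the latter to yield any saving, whereas (A) gives a genuine multiplicative gain $\exp(-c_b\eta^2)<1$ at each good index no matter how small $\eta$ is.

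For (A), by periodicity and symmetry it suffices to treat $u\in[0,\tfrac12]$. Near $u=0$ the Taylor expansions of $\sin(\pi bu)$ and $\sin(\pi u)$ give $|f_1(u)|/b = 1 - \tfrac{\pi^2(b^2-1)}{6}u^2 + O(u^4)$, which is $\le \exp(-c_b u^2)$ on a small interval $[0,\delta_b]$; for $u\in[\delta_b,\tfrac12]$ one uses $\sin(\pi u)\ge 2u$ to get $|f_1(u)|/b\le 1/(2bu)$, which is $\le \exp(-c_b u^2)$ after shrinking $c_b$. This is a routine calculus estimate.

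The heart of the proof is (B), which I would establish by showing that the ``bad'' indices $i$ with $\norm{u_i}<\eta$ occur only in blocks of consecutive integers of length $\le \log_b d + O_b(1)$. Suppose $\norm{u_i}<\eta$ on a block of length $S$, and set $a_i\defeq b^i+b^{2l-i}$. I would apply discrete difference operators twice: the first difference $D_i\defeq a_{i+1}-a_i$ satisfies $\norm{D_i t}<2\eta$, and the combination $D_{i+1}-bD_i=(b-1)^2(b+1)\,b^{2l-i-2}$ isolates a single power of $b$, yielding $\norm{(b-1)^2(b+1)b^{2l-i-2}t}<2(b+1)\eta$ throughout a sub-block of length $S-2$. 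Two features now conspire: since $\frac{(b-1)^2(b+1)}{b^3-b}=\frac{b-1}{b}$, the shift $\frac{m}{b^3-b}$ contributes an integer and drops out mod $1$, so only the $a/d$ part survives; and because $(d,b^3-b)=1=(d,a)$, the integer $(b-1)^2(b+1)b^{2l-i-2}a$ is coprime to $d$. Writing the surviving fractional part as $p/d$ with $(p,d)=1$, we obtain $\norm{b^j\,p/d}<\eta'\defeq 2(b+1)\eta$ for $j$ in an interval of length $S-2$. Choosing $\eta$ so small that $(b+1)\eta'<1$, for consecutive $j$ the integer $b p_j-p_{j+1}$ is divisible by $d$ and has absolute value $<d$, hence vanishes; thus $p_{j+1}=bp_j$ exactly, and iterating gives $b^{S-3}\le |p_{\cdot}|<d$, forcing $S\le \log_b d + O_b(1)$.

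The main obstacle is exactly this last step: one must engineer the difference combination so that its coefficient remains coprime to $d$---this is precisely where the hypotheses $(d,b^3-b)=1$ and the specific shift $\frac{m}{b^3-b}$ enter---and then exploit the ``no wrap-around forces geometric growth'' dichotomy for the orbit of $p/d$ under multiplication by $b$. Once the block bound is in hand, every window of $\log_b d+O_b(1)$ consecutive indices contains a good index, so there are $\gg_b l/\log d$ good indices among $1\le i\le l-1$. Combining with (A),
\[
\prod_{i=1}^{l-1}\frac{|f_1(u_i)|}{b}\le \exp\Big(-c_b\sum_{i=1}^{l-1}\norm{u_i}^2\Big),
\]
and since $\sum_i\norm{u_i}^2\ge \eta^2\,\#\{\text{good }i\}\gg_b l/\log d$, this gives $\big|\tilde{\Phi}_l(t)\big|\ll_b \exp(-c_b'\,l/\log d)$, as claimed.
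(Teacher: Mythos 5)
Your proposal addresses a statement that the paper itself never proves: Lemma \ref{lem: L Infinity bound} is imported verbatim from \cite{TuxanidyPanario} (Proposition 6.2), so there is no internal proof to compare against, and your argument has to stand on its own --- which it essentially does. The factorization $|\tilde{\Phi}_l(t)|=\tfrac{1}{b(b-1)}\prod_{i=1}^{l-1}|f_1(u_i)|/b$ is correct, and the heart of the proof, your part (B), is right and well executed: the double difference $D_{i+1}-bD_i=(b-1)^2(b+1)b^{2l-i-2}$ does isolate a single power of $b$; the shift $m/(b^3-b)$ does vanish modulo $1$ because $(b-1)^2(b+1)/(b^3-b)=(b-1)/b$ (this needs $2l-i-3\ge 0$, which holds since $i+2\le l-1$ inside your sub-block); the hypothesis $(d,a(b^3-b))=1$ enters exactly where you say it does, forcing $(p_j,d)=1$ and hence $p_j\neq 0$; and the rigidity step ($d\mid bp_j-p_{j+1}$ together with $|bp_j-p_{j+1}|<d$ forces $p_{j+1}=bp_j$, hence geometric growth of $|p_j|$) correctly caps the bad blocks at length $\log_b d+O_b(1)$. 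Together with the trivial regime $l\ll_b\log d$ (where the claimed bound holds with no work since $|\tilde{\Phi}_l|\ll_b 1$), this gives $\gg_b l/\log d$ good indices and the lemma follows. In spirit this differencing-plus-orbit-rigidity mechanism is the same one the palindrome literature (Banks--Hart--Sakata, Col, Tuxanidy--Panario) uses to show that $\norm{(b^i+b^{2l-i})t}$ cannot stay small for many consecutive $i$, so your route is a legitimate reconstruction of the cited result rather than a detour around it.

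The one step that needs repair is the calculus claim (A), though the flaw is cosmetic rather than structural. As written, the chain $|f_1(u)|/b\le 1/(2bu)\le\exp(-c_bu^2)$ is false wherever $1/(2bu)\ge 1$, i.e.\ for $u$ up to roughly $1/(2b)$, while the Taylor expansion around $0$ is only valid on a range $u\ll 1/b$; so the two regimes you describe need not cover $[0,\tfrac12]$. Either note that you only ever use the much weaker statement that $\norm{u}\ge\eta$ implies $|f_1(u)|/b\le 1-\epsilon_b(\eta)$ --- immediate from compactness and continuity, since $|f_1(u)|=b$ only for $u\in\bb{Z}$ --- or obtain the Gaussian bound cleanly from the identity
\begin{equation}
b^2-|f_1(u)|^2=2\sum_{j=1}^{b-1}(b-j)\bigl(1-\cos(2\pi ju)\bigr)\ge 2(b-1)\bigl(1-\cos(2\pi u)\bigr)\ge 16(b-1)\norm{u}^2,
\end{equation}
which yields $|f_1(u)|/b\le\exp\bigl(-8(b-1)\norm{u}^2/b^2\bigr)$ for every real $u$. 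With either fix, your proof is complete.
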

This shows that $\tilde{\Phi}_l(a/d)$ satisfies a slight extension of \eqref{eq: L^infinity estimate}, but with the more restrictive condition that $d$ is relativity prime to both $a$ and $b^3-b$. This leads us to consider the more restrictive set (just as in \cite{TuxanidyPanario} and \cite{ShashiDaniel}) 
 \[
  \cs{P}_b^{*}\defeq \{n\in\cs{P}_b \; : \; (n,b^3-b)=1\}.
 \]
Now we prove the analogue of \eqref{eq: Fourier transform decreasing in x}:
\begin{lem}\label{lem: Decreasingness of Phi}
  Let $l_1,l_2,d$ be positive integers such that $l_2\le l_1$ and $(d,b^3-b)=1$. Then, 
  \begin{equation}\label{eq: Decreasing of Phi on average}
  \sum_{\substack{0\lt a \lt d \\ (a,d)=1}}\max_{m\in\bb{Z}}\Big|\tilde{\Phi}_{l_1}\Big(\f{a}{d}+\frac{m}{b^3-b}\Big)\Big|\le \sum_{\substack{0\lt a \lt d \\ (a,d)=1}}\max_{m\in\bb{Z}}\Big|\tilde{\Phi}_{l_2}\Big(\f{a}{d}+\frac{m}{b^3-b}\Big)\Big|
  \end{equation}
\end{lem}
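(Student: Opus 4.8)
The plan is to reduce the inequality to a clean \emph{pointwise} comparison between $\tilde{\Phi}_{l_1}$ and a dilation of $\tilde{\Phi}_{l_2}$, and then to transport that pointwise bound through the sum over reduced residues by a change of variables. First I would uncover a self-similar factorization of $\Phi_{l_1}$ in terms of $\Phi_{l_2}$. Setting $\Delta\defeq l_1-l_2\ge 0$ and recalling $\Phi_l(t)=\prod_{i=1}^{l-1}f_1((b^i+b^{2l-i})t)$, the top $l_2-1$ factors of $\Phi_{l_1}$ (those with index $i=\Delta+1,\ldots,l_1-1$) can be reindexed via $i=j+\Delta$, and a direct computation gives $b^{j+\Delta}+b^{2l_1-j-\Delta}=b^{\Delta}(b^j+b^{2l_2-j})$, so these factors assemble into $\Phi_{l_2}(b^\Delta t)$. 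This yields
\[
\Phi_{l_1}(t)=\Big(\prod_{i=1}^{\Delta}f_1\big((b^i+b^{2l_1-i})t\big)\Big)\,\Phi_{l_2}(b^\Delta t).
\]

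Since $|f_1(s)|\le b$ for every $s$, the remaining $\Delta$ factors contribute at most $b^\Delta$, and because the normalizations satisfy $b^\Delta\cdot b^{l_2}=b^{l_1}$, dividing by $(b-1)b^{l_1}$ makes the powers of $b$ cancel exactly, giving the pointwise bound $\big|\tilde{\Phi}_{l_1}(t)\big|\le\big|\tilde{\Phi}_{l_2}(b^\Delta t)\big|$ for all $t$. I would then specialize to $t=\frac{a}{d}+\frac{m}{b^3-b}$: since $b^\Delta\big(\frac{a}{d}+\frac{m}{b^3-b}\big)=\frac{b^\Delta a}{d}+\frac{b^\Delta m}{b^3-b}$ and $b^\Delta m$ is again an integer, passing from the shifts $b^\Delta m$ to all integer shifts only enlarges the maximum, so
\[
\max_{m\in\bb{Z}}\Big|\tilde{\Phi}_{l_1}\Big(\frac{a}{d}+\frac{m}{b^3-b}\Big)\Big|\le\max_{m\in\bb{Z}}\Big|\tilde{\Phi}_{l_2}\Big(\frac{b^\Delta a}{d}+\frac{m}{b^3-b}\Big)\Big|.
\]
Finally I would sum over reduced residues. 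The hypothesis $(d,b^3-b)=1$ forces $(d,b)=1$, so multiplication by $b^\Delta$ permutes the reduced residues modulo $d$; combined with the period-$1$ invariance of $\tilde{\Phi}_{l_2}$ (a product of $f_1$ evaluated at integer multiples of $t$), the substitution $a\mapsto b^\Delta a\bmod d$ turns the right-hand sum into exactly $\sum_{(a,d)=1}\max_m|\tilde{\Phi}_{l_2}(a/d+m/(b^3-b))|$, which is the desired right-hand side.

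The only genuinely nontrivial step is the first one: spotting that the top block of factors of $\Phi_{l_1}$ is precisely the dilated copy $\Phi_{l_2}(b^\Delta t)$. Once this self-similar structure of the palindromic product is identified, the bookkeeping of powers of $b$ is forced by the normalization $\#\cs{B}_{2l+1}=(b-1)b^l$, and everything else is routine. Along the way I would verify the edge case $\Delta=0$ (where the inequality degenerates to an equality), confirm the factor count ($\Delta$ low factors together with the $l_2-1$ top factors account for all $l_1-1$ factors of $\Phi_{l_1}$), and note that the shift term $\frac{b^\Delta m}{b^3-b}$ need not hit every residue, since restricting to a subset of shifts only lowers a quantity we are bounding from above by the full maximum.
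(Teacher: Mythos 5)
Your proof is correct and takes essentially the same approach as the paper: the paper obtains your pointwise bound $|\tilde{\Phi}_{l_1}(t)|\le|\tilde{\Phi}_{l_2}(b^{l_1-l_2}t)|$ by iterating the one-step recursion $\Phi_l(t)=f_1((b+b^{2l-1})t)\,\Phi_{l-1}(bt)$ together with the trivial bound $|f_1|\le b$, which is exactly your direct factorization, and then finishes with the same observation that multiplication by a power of $b$ permutes the reduced residues modulo $d$ and only shifts the $m/(b^3-b)$ terms by integers, leaving the right-hand sum invariant. Your write-up is, if anything, slightly more careful on the two points the paper leaves implicit (that $(d,b^3-b)=1$ forces $(d,b)=1$, and that the shifts $b^{\Delta}m$ form a subset of all integer shifts).
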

\begin{proof}
  From the definition of $\Phi_l(t)$ we observe that 
  \[
  \Phi_l(t)=f_1((b+b^{2l-1})t)\Phi_{l-1}(bt). 
  \]
  Hence, using the trivial bound $|f(\theta)|\le b$ for any $\theta$, we have 
  \[
  |\Phi_l(t)|\le b|\Phi_{l-1}(bt)|.
  \]
  This shows that 
  \begin{equation}\label{eq: Phi decreasing in l}
   |\tilde{\Phi}_l(t)|=\frac{|\Phi_l(t)|}{(b-1)b^l}\le \frac{|\Phi_l(bt)|}{(b-1)b^{l-1}}=|\tilde{\Phi}_{l-1}(bt)|.
  \end{equation}
  Using this equation inductively we see that if $l_2\le l_1$, then 
  \[
   |\tilde{\Phi}_{l_1}(t)|\le  |\tilde{\Phi}_{l_2}(b^{l_1-l_2}t)|.
  \]
   Now, letting $t=\f{a}{d}+\frac{m}{b^3-b}$, and summing over $0\lt a\lt d$ with $(a,d)=1$ completes the proof upon observing that since $(d,b^3-b)=1$ and $(a,d)=1$, then multiplication by a power of $b$ keeps the sum on the right hand side of \eqref{eq: Decreasing of Phi on average} invariant.
\end{proof}
\begin{remark}
The main reason we work with $\Phi_l(t)$ instead of $f_{2l+1}(t)$ is because the later does not satisfy a condition as nice as \eqref{eq: Decreasing of Phi on average}. In fact, a recursive formula for $f_{2l+1}(t)$ involves all the functions $f_{2j+1}(t)$ for $j=0,\ldots,l-1$: 
\[
f_{2l+1}(t)=(f_1((1+b^{2l})t)-1)\sum_{j=0}^{l-1}f_{2j+1}(b^{l-j}t).
\]
\end{remark}
Now we are interested in $L^{1}$ bounds for $\Phi_l$ and $\Phi_l'$. Note that by Parseval's identity, 
\[
\int_{0}^{1}|f_{2l+1}(t)|^2\dt = \sum_{n\in\cs{B}_{2l+1}} 1^2= (b-1)b^l \quad\mbox{and}\quad \int_{0}^{1}|f_{2l+1}'(t)|^2\dt\ll b^{5l}
\]
so that by the Cauchy-Schwartz inequality we have 
\[
\int_{0}^{1}|f_{2l+1}(t)|\dt\ll b^{l/2}\quad\mbox{and}\quad \int_{0}^{1}|f_{2l+1}'(t)|\dt\ll b^{5l/2}.
\]
We expect that $\Phi_l(t)$ satisfies the same bounds as above. However, this is not as straightforward to show, because when expanding the product as 
\[
|\Phi_l(t)|^2=|\Phi_l(t) \Phi_l(-t)|=b^{l-1}+\sum_{1\le n\le N} a_n \me (\al_n t)
\]
for some $N\in\bb{Z}^{+}$ and some sequences $a_n, \al_n$, it is not trivial to show that $\al_n\neq 0$. Moreover, even if this holds, we would only get a ``trivial'' bound for $\norm{\Phi_l}_1$. Therefore, we use a different approach, using similar ideas to the ones from Maynard \cite{MaynardDigitsOfPrimes} to show that for sufficiently large base $b$, we can do better. 
\begin{lem}\label{lem: L1 Bound for Phi}
Let 
\[
\alpha_b\defeq \frac{\log{b}-\log(4+\log(\f{b}{2}-1))}{2\log{b}}.
\]
Then, for $x=b^{2l}$, we have 
\[
\int_{0}^{1}|\tilde{\Phi}_l(t)|\dt \ll \frac{1}{x^{\alpha_b}} \quad \mbox{and}\quad \int_{0}^{1}|\tilde{\Phi}_l'(t)|\dt \ll l x^{1-\alpha_b}.
\]
In particular, for every $b\ge 1100$, there is some $\de_b\gt 0$ such that 
\[
\int_{0}^{1}|\tilde{\Phi}_l(t)|\dt \ll \frac{1}{x^{\f13+\de_b}} \quad \mbox{and}\quad \int_{0}^{1}|\tilde{\Phi}_l'(t)|\dt \ll x^{\f23-\de_b}.
\]
\end{lem}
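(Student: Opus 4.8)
The plan is to induct on $l$, at each step peeling off the highest-frequency factor of $\Phi_l$ and averaging it against the rest of the product. The starting point is the factorization recorded in the proof of Lemma~\ref{lem: Decreasingness of Phi}, namely $\Phi_l(t)=f_1((b+b^{2l-1})t)\,\Phi_{l-1}(bt)$, which normalizes to $\tilde\Phi_l(t)=\tf1b f_1((b+b^{2l-1})t)\,\tilde\Phi_{l-1}(bt)$. Writing $A_l\defeq\int_0^1|\tilde\Phi_l(t)|\dt$ and using that $\tilde\Phi_{l-1}(b\,\cdot)$ has period $1/b$, so that $\int_0^1|\tilde\Phi_{l-1}(bt)|\dt=A_{l-1}$, the whole first bound reduces to the clean one-step estimate $A_l\le \tf{C_b}{b}\,A_{l-1}$ with $C_b\defeq 4+\log(\tf b2-1)$. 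Indeed, since $b^{2\al_b}=b/C_b$ and $x=b^{2l}$, iterating from $A_1=O(b^{-2})$ telescopes to $A_l\ll (C_b/b)^l=x^{-\al_b}$.

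Everything hinges on the pointwise bound $|f_1(\te)|\le\min\{b,\tf{1}{2\norm{\te}}\}$. Substituting $s=bt$ turns the one-step integral into $\tf1b\int_0^1|f_1(Ms)|\,|\tilde\Phi_{l-1}(s)|\ds$ with $M=1+b^{2l-2}$. The period $1/M$ of $|f_1(M\,\cdot)|$ is a factor $b$ finer than the finest oscillation scale $\approx b^{-(2l-3)}$ of $\tilde\Phi_{l-1}$, whose highest frequency is $b+b^{2l-3}$; thus on each scale where $\tilde\Phi_{l-1}$ is essentially constant the rapid factor $|f_1(M\,\cdot)|$ averages to its mean $\int_0^1|f_1|$. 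The tail $\int\tf{\ds}{2\norm{s}}$ of this mean produces the logarithmic constant $\log(\tf b2-1)$, while the central spike of $f_1$ on $\norm{s}\lesssim 1/b$ contributes the harmless additive constant, giving the $C_b$ above.

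The main obstacle is that $\tilde\Phi_{l-1}$ is far from slowly varying: it has sharp peaks near rationals $a/d$ with $d$ small, where many of its factors are simultaneously large. Estimating the decoupling error by the total variation $\int_0^1|\tilde\Phi_{l-1}'|=B_{l-1}$ is fatally lossy, since $B_{l-1}\asymp (l-1)\,b^{2l-2}A_{l-1}$ makes such an error contribute $\asymp (l-1)A_{l-1}$ per step and hence compound to a spurious factorial; the real content is to show that the per-step multiplier is the genuine constant $C_b$. I would achieve this by a major/minor-arc split: on the minor arcs $\tilde\Phi_{l-1}$ is smooth and small and the averaging of $|f_1(M\,\cdot)|$ is clean, whereas on the major arcs one bounds the peaks using the $L^\infty$ estimate of Lemma~\ref{lem: L Infinity bound}, exploiting that $1/M$ is still finer than each peak so that $f_1(M\,\cdot)$ completes $\asymp b$ oscillations across it. This is precisely where the separation of the frequencies $b^{2l-i}$ into distinct powers of $b$ is used, allowing one scale to be decoupled at a time.

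For the derivative bound I would differentiate the factorization to get $\tilde\Phi_l'(t)=\tf{b+b^{2l-1}}{b}f_1'((b+b^{2l-1})t)\tilde\Phi_{l-1}(bt)+f_1((b+b^{2l-1})t)\tilde\Phi_{l-1}'(bt)$ and run the same averaging with the weight $|f_1'|$ (for which $\int_0^1|f_1'|\ll b\log b$) on the first term and with $\tilde\Phi_{l-1}'$ on the second; solving the resulting coupled recursion in $(A_l,B_l)$ yields $B_l\ll l\,x^{1-\al_b}$, the factor $l$ arising from the $l-1$ summands of the product rule. Finally, for the stated regime one checks the elementary inequality $b^{1/3}>4+\log(\tf b2-1)$, which is monotone in $b$ and holds with near-equality at $b=1100$; this is exactly $\al_b>\tf13$, so setting $\de_b\defeq\al_b-\tf13>0$ gives the first displayed bound, and since $l\asymp\log x$ is absorbed by any fixed power of $x$ one gets $B_l\ll l\,x^{2/3-\de_b}\ll x^{2/3-\de_b/2}$, as claimed.
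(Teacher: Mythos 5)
You have correctly located the crux (the naive one-step decoupling with a total-variation error loses a factor $\asymp l$ per step and compounds to a spurious factorial), and your constant $C_b=4+\log(\tf{b}{2}-1)$ and the equivalence $\al_b>\tf13\iff b^{1/3}>C_b$ are exactly right; but the step that is supposed to replace the lossy decoupling --- your major/minor-arc argument --- is only asserted, and it is not a fillable detail: it \emph{is} the lemma. The quantitative demands are brutal. If each peeling step costs a multiplicative factor $K$, then over $l$ steps you pay $K^{l}=x^{\log K/(2\log b)}$, so for $b=1100$, where $\de_b=\al_b-\tf13\approx 1.1\times 10^{-4}$, your induction survives only if $K\le b^{2\de_b}\approx 1.0016$: you must average $|f_1(M\cdot)|$ against $|\tilde{\Phi}_{l-1}|$ with relative error below about $0.16\%$, uniformly in $l$, including on the transition regions between your major and minor arcs where the peak width is comparable to $1/M$ and the ``many oscillations'' heuristic fails. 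Circle-method splits lose unspecified absolute constants, not factors of size $1+10^{-3}$. Worse, the $L^{\infty}$ input you invoke, Lemma~\ref{lem: L Infinity bound}, applies only at points $\f{a}{d}+\f{m}{b^3-b}$ with $(d,a(b^3-b))=1$; it gives no control near rationals whose denominator shares a factor with $b^3-b=b(b-1)(b+1)$ --- for instance near $t=\f{a}{b^j}$, where most factors of $\Phi_{l-1}$ equal $b$ and the function genuinely peaks (this is the very reason the lemma carries the shift $\f{m}{b^3-b}$) --- and the decay $\exp(-c_b l/\log d)$ it does provide is far too weak to be summed over all relevant $d$ inside a $10^{-3}$ error budget. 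So the proposal has a genuine gap precisely at what you yourself call ``the real content.''

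The paper's proof avoids all of this by never peeling one frequency at a time, so no error ever compounds. It partitions $[0,1)$ into the $b^{2l}$ cells determined by the first $2l$ base-$b$ digits of $t$ and bounds $\int_0^1|\Phi_l(t)|\dt$ by $b^{-2l}$ times the sum of cell suprema. On the cell with digits $(t_1,\ldots,t_{2l})$ one has $\norm{(b^i+b^{2l-i})t}=\norm{\tf{t_{i+1}}{b}+\tf{t_{2l+1-i}}{b}+w_i}$ with $w_i\in[0,\tf{2}{b})$, so after inserting a supremum over this window into the single-pair quantity $G(\theta_1,\theta_2)$, the $i$-th factor of $|\Phi_l|$ is bounded by a function of the digit pair $(t_{i+1},t_{2l+1-i})$ alone; since these pairs are pairwise disjoint as $i$ varies, the sum over all cells factors exactly --- up to a power of $b$ from the unpaired digits --- into a product of identical double sums $\sum_{\theta_1,\theta_2}G(\theta_1,\theta_2)$, which the Hankel structure of the matrix $[G(i,j)]$ evaluates as $b\sum_{\theta}G(0,\theta)\le b\,C_b$. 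Every loss (the window supremum, the bound $|f_1(y)|\le\min\{b,\tf{1}{2\norm{y}}\}$) is paid exactly once, inside the definition of $C_b$, which is what makes the razor-thin margin at $b=1100$ usable; the derivative bound then follows from the product rule at the cost of the factor $lb^{2l}=lx$, as you anticipated. If you insist on an inductive formulation, the induction would have to propagate this digit-cell information rather than just the scalars $(A_l,B_l)$ --- and at that point you will have reconstructed the paper's argument.
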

\begin{proof}
  We begin by noting that 
  \begin{align}
    \int_{0}^{1}|\Phi_l(t)|\dt={} & \sum_{0\le a\lt b^{2l}}\int_{\f{a}{b^{2l}}}^{\f{a}{b^{2l}}+\f{1}{b^{2l}}}|\Phi_l(t)|\dt \nonumber\\ 
    ={} & \sum_{0\le a\lt b^{2l}}\int_{0}^{\f{1}{b^{2l}}} \Big|\Phi_l\Big(\frac{a}{b^{2l}}+\eta\Big)\Big|\d\eta \nonumber\\
    \le{} & \frac{1}{b^{2l}}\sum_{0\le a\lt b^{2l}}\sup_{\eta\in [0,\f{1}{b^{2l}})}\Big|\Phi_l\Big(\frac{a}{b^{2l}}+\eta\Big)\Big| \nonumber\\
    ={}& \frac{1}{b^{2l}}\sum_{a_1,a_2,\ldots,a_{2l}\in\{0,\ldots,b-1\}}\sup_{\eta\in [0,\f{1}{b^{2l}})}\Big|\Phi_l\Big(\f{a_1}{b}+\f{a_2}{b^2}+\cdots+\f{a_{2l}}{b^{2l}}+\eta\Big)\Big|. \label{eq: L1Bound via sum of sup of digits}
  \end{align}
 Let $\norm{\cdot}$ denote the distance to the nearest integer function. Note that for any $y \in\bb{R}$, since $|\sin(\pi y)|\ge 2\norm{y}$, then 
  \[
  |f_1(y)|=\Big|\frac{\me(by)-1}{\me(y)-1}\Big| = \Big|\frac{\sin(\pi by)}{\sin(\pi y)}\Big|\le\min\Big\{b, \frac{1}{2\norm{y}}\Big\}.
  \]
  This together with the definition of $\Phi_l(t)$ shows that 
  \begin{align}
    |\Phi_l(t)|={} & \prod_{i=1}^{l-1}\Big|\frac{\sin(\pi(b^{i+1}+b^{2l+1-i})t)}{\sin(\pi(b^{i}+b^{2l-i})t)}\Big| \nonumber\\
    \le {}&  \prod_{i=1}^{l-1}\min\Big\{b, \frac{1}{2\norm{(b^{i}+b^{2l-i})t}}\Big\}\nonumber \\
    ={} & b^{l-1}\prod_{i=1}^{l-1} \min\Big\{1, \frac{1}{2b\norm{(b^{i}+b^{2l-i})t}}\Big\}.\label{eq: Bound Phi via min}
  \end{align}
  Now, for $t\in[0,1)$, by writing 
  \[
  t=\f{t_1}{b}+\f{t_2}{b^2}+\cdots +\f{t_i}{b^i}+ \cdots+\f{t_{2l-i}}{b^{2l-i}}+\cdots,
  \]
  we see that 
  \[
  b^i t=\mbox{Integer} + \f{t_{i+1}}{b}+u_i \quad \mbox{and}\quad b^{2l-i} t=\mbox{Integer} + \f{t_{2l+1-i}}{b}+v_i
  \]
  for some $u_i,v_i \in [0,1/b)$, showing that 
  \[
  \norm{(b^i+b^{2l-i})t}=\norm{\f{t_{i+1}}{b}+\f{t_{2l+1-i}}{b}+w_i}
  \]
  for some $w_i\in [0,2/b)$. This together with \eqref{eq: L1Bound via sum of sup of digits} and \eqref{eq: Bound Phi via min} shows that 
  \[
  \int_{0}^{1}|\Phi_l(t)|\dt \ll \f{1}{b^l} \sum_{a_1,\ldots, a_{2l-1}\in\{0,\ldots,b-1\}}\prod_{i=1}^{l-1}\sup_{\beta\in[0,\f{2}{b})}\min\Big\{1,\f{1}{2b\norm{\f{a_i}{b}+\f{a_{2l-i}}{b}+\beta}}\Big\}.
  \]
  Given digits $\theta_1,\theta_2\in \{0,\ldots, b-1\}$, we let 
  \[
  G(\theta_1,\theta_2)\defeq \sup_{\beta\in[0,\f{2}{b})}\min\Big\{1,\f{1}{2b\norm{\f{\theta_1}{b}+\f{\theta_2}{b}+\beta}}\Big\}.
  \]
  Then, 
  \begin{align}
    \int_{0}^{1}|\Phi_l(t)|\dt\ll{} & \f{1}{b^l} \sum_{a_1,\ldots, a_{2l-1}\in\{0,\ldots,b-1\}}G(a_1,a_{2l-1})G(a_2,a_{2l-2})\cdots G(a_{l-1},a_{l+1}) \nonumber \\
    \ll{} & \f{1}{b^l} \Big(\sum_{\theta_1,\theta_2\in\{0,\ldots,b-1\}}G(\theta_1,\theta_2) \Big)^l.\label{eq: L1 bound in terms of G}
  \end{align}
  In order to understand $G(\theta_1,\theta_2)$ we consider the $b\times b$ matrix 
  \[
  M_b\defeq [G(i,j)]_{i,j\in\{0,\ldots,b-1\}},
  \]
  and we observe that $M_b$ satisfies the following properties:
  \begin{enumerate}[{\rm (i)}]
    \item $M_b$ is symmetric because $G(\theta_1,\theta_2)=G(\theta_2,\theta_1)$.
    \item $M_b$ is a Hankel matrix (i.e., a matrix in which each ascending skew-diagonal from left to right is constant) because 
    \[
        G(\theta_1,\theta_2)=G(\theta_1+1,\theta_2-1).
   \]
    \item For any $\theta_2\in\{0,\ldots,b-2\}$, we have $G(0,\theta_2)=G(0,b-2-\theta_2)$.
  \end{enumerate}
  For example, in base $b=10$, the matrix looks like this:
  \[M_{10}=
  \begin{bmatrix}
    1 & \f12 & \f14 & \f16 & \f18 & \f16 & \f14 & \f12 & 1 & 1 \\[5pt]
    \f12 & \f14 & \f16 & \f18 & \f16 & \f14 & \f12 & 1 & 1 & 1 \\[5pt]
    \f14 & \f16 & \f18 & \f16 & \f14 & \f12 & 1 & 1 & 1 & \f12 \\[5pt]
    \f16 & \f18 & \f16 & \f14 & \f12 & 1 & 1 & 1 & \f12 & \f14 \\[5pt]
    \f18 & \f16 & \f14 & \f12 & 1 & 1 & 1 & \f12 & \f14 & \f16 \\[5pt]
    \f16 & \f14 & \f12 & 1 & 1 & 1 & \f12 & \f14 & \f16 & \f18 \\[5pt]
    \f14 & \f12 & 1 & 1 & 1 & \f12 & \f14 & \f16 & \f18 & \f16 \\[5pt]
    \f12 & 1 & 1 & 1 & \f12 & \f14 & \f16 & \f18 & \f16 & \f14 \\[5pt]
    1 & 1 & 1 & \f12 & \f14 & \f16 & \f18 & \f16 & \f14 & \f12 \\[5pt]
    1 & 1 & \f12 & \f14 & \f16 & \f18 & \f16 & \f14 & \f12 & 1 
  \end{bmatrix}
  \]
  From the above properties, it is clear that 
  \begin{equation}\label{eq: DoubleSum of G}
  \sum_{\theta_1,\theta_2\in\{0,\ldots,b-1\}}G(\theta_1,\theta_2)=b\sum_{0\le \theta_2\le b-1}G(0,\theta_2).
  \end{equation}
  Firsts let's assume that $b$ is even. Then, for $\theta_2\le \f{b}{2}-2$, $\f{\theta_2}{b}+\be\le \f{1}{2}$, so that
  \[
  \norm{\f{\theta_2}{b}+\be}=\f{\theta_2}{b}+\be\ge\f{\theta_2}{b},
  \]
  showing that for $\theta_2\neq 0$, 
  \[
  G(0,\theta_2)\le \frac{1}{2\theta_2}.
  \]
  Now, it is easy to see that $G(0,\f{b}{2}-1)=\f{1}{b-2}$, and also $G(0,0)=G(0,b-1)=G(0,b-2)=1$. Combining this with the identity $G(0,\theta_2)=G(0,b-2-\theta_2)$ gives us the estimate
  \[
  \sum_{0\le \theta_2\le b-1}G(0,\theta_2)\le 3+\sum_{1\le \theta_2\le \f{b}{2}-1}\f{1}{\theta_2}\le 4+\log\Big(\f{b}{2}-1\Big).
  \]
  A similar argument shows that the same bound holds when $b$ is odd. Combining this with \eqref{eq: L1 bound in terms of G} and \eqref{eq: DoubleSum of G} gives us 
  \[
  \int_{0}^{1}|\Phi_l(t)|\dt \ll \Big(4+\log\Big(\f{b}{2}-1\Big)\Big)^l.
  \]
  After normalizing, we obtain 
  \[
  \int_{0}^{1}|\tilde{\Phi}_l(t)|\dt \ll \Big(\frac{4+\log\Big(\f{b}{2}-1\Big)}{b}\Big)^l=\frac{1}{x^{\al_b}},
  \]
  upon recalling that $x=b^{2l}$ and 
  \[
  \alpha_b= \frac{\log{b}-\log(4+\log(\f{b}{2}-1))}{2\log{b}}.
  \]
  The bounds for $\Phi_l'(t)$ are completely analogous because 
  \[
  \Phi_l'(t)=\sum_{j=1}^{l-1}f_1'((b^{j}+b^{2l-j})t)(b^{j}+b^{2l-j})\prod_{\substack{i=1 \\ i\neq j}}^{l-1}f_1((b^{i}+b^{2l-i})t),
  \]
  so that 
  \[
  |\Phi_l'(t)|\ll b^{2l}\sum_{j=1}^{l-1}\prod_{\substack{i=1 \\ i\neq j}}^{l-1}f_1((b^{i}+b^{2l-i})t).
  \]
  Hence, by a completely analogous argument as before, we see that we have the same bound as for $\norm{\Phi_l}_1$, but multiplied by $lb^{2l}=lx$. Finally, a numerical computation shows that 
  \[
  \al_{1100}\approx 0.333445\ldots \gt \f13.
  \]
  This completes the proof.
\end{proof}
We are now ready to combine our $L^{\infty}$ and $L^{1}$ estimates together with the ``decreasing'' property of $\Phi_l(t)$ in order to prove Theorem \ref{th: Cubefree}. We restate the theorem here for convenience to the reader: 
\cubefree*
\begin{proof}
  We follow an analogous procedure as in our proof of Theorem \ref{th: Main Theorem}, and we start by writing
  \begin{align*}
  \sum_{\substack{n\in\cs{P}_b^{*}(x) \\ n \text{ is cubefree}}}1={} & \sum_{n\in\cs{P}_b^{*}(x)} \sum_{q^3|n}\mu(q) \\
  ={} & \sum_{\substack{q\le x^{\f{1}{3}}\\ (q,b^3-b)=1}} \mu(q)\sum_{\substack{n\in\cs{P}_b^{*}(x) \\ q^3|n}}1 \\
  ={} &\sum_{\substack{q\le x^{\f{1}{3}}\\ (q,b^3-b)=1}} \mu(q)\frac{\#\cs{P}_b^{*}(x)}{q^3}+\sum_{\substack{q\le x^{\f{1}{3}}\\ (q,b^3-b)=1}}\mu(q)\Big( \sum_{\substack{n\in\cs{P}_b^{*}(x) \\ q^3|n}}1-\frac{\#\cs{P}_b^{*}(x)}{q^3}\Big)\\
  ={}& \frac{\cs{P}_b^{*}(x)}{\zeta(3)}\prod_{p|b^3-b}\Big(1-\frac{1}{p^3}\Big)^{-1}+o\Big(\frac{\#\cs{P}_b^{*}(x)}{x^{\f13}} \Big)+O(E),
\end{align*}
where 
\[
E\defeq \sum_{\substack{q\le x^{\f{1}{3}}\\ (q,b^3-b)=1}}\Big( \sum_{\substack{n\in\cs{P}_b^{*}(x) \\ q^3|n}}1-\frac{\#\cs{P}_b^{*}(x)}{q^3}\Big).
\]
Our objective is to show that $E=o(\#\cs{P}_b^{*}(x))$. Since $\#\cs{P}_b^{*}(x)\asymp \sqrt{x}$ (\cite{TuxanidyPanario}, Lemma 9.1), it suffices to show that $E=o(\sqrt{x})$. In order to do this, we start by relating $\#\cs{P}_b^{*}(x)$ with $\cs{A}(x)$ as follows: first observe that if $n\in\cs{P}_b$ has an even number of digits, then 
\[
n=n_0+n_1b+\cdots + n_{2l-1}b^{2l-1}=n_0(1+b^{2l-1}) +n_1(b+b^{2l-2})+\cdots + n_{l-1}(b^{l-1}+b^{l}),
\]
so that $(b+1) | n$. Therefore, upon recalling that $\cs{A}$ is the set of palindromes with an odd number of digits, we have 
\[
\#\cs{P}_b^{*}(x)=\{n\in\cs{A}(x) \; :\; (n,b^3-b)=1\}.
\]
This together with M\"obius inversion 
\begin{equation}\label{eq: MobiusInversion}
\sum_{d|(n,b^3-b)}\mu(d)=\begin{cases}
                           1, & \mbox{if } (n,b^3-b)=1 \\
                           0, & \mbox{otherwise}
                         \end{cases}
\end{equation}
shows that 
\begin{equation}\label{eq: Pb* in terms of A}
\sum_{\substack{n\in\cs{P}_b^{*}(x) \\ q^3|n}}1-\frac{\#\cs{P}_b^{*}(x)}{q^3}=\sum_{d|b^3-b}\mu(d) \sum_{\substack{n\in\cs{A}(x)\\ d|n}}\Big(\1_{q^3|n}-\f{1}{q^3}\Big).
\end{equation}
By the orthogonality relations, 
\begin{equation}\label{eq: OrthogonalityRelations n divisible by d}
\1_{d|n}=\f{1}{d}\sum_{0\le m\lt d} \me\Big(\f{mn}{d}\Big), 
\end{equation}
and 
\begin{equation}\label{eq: Orthogonality Relations n divisible by q3}
\1_{q^3|n}-\f{1}{q^3}=\f{1}{q^3}\sum_{0\lt a\lt q^3}\me\Big(\f{an}{q^3}\Big).
\end{equation}
Hence, after plugging \eqref{eq: OrthogonalityRelations n divisible by d} and \eqref{eq: Orthogonality Relations n divisible by q3} into \eqref{eq: Pb* in terms of A}, we see that
\begin{align*}
  \bigg|\sum_{\substack{n\in\cs{P}_b^{*}(x) \\ q^3|n}}1-\frac{\#\cs{P}_b^{*}(x)}{q^3}\bigg| \le {}& \sum_{d|b^3-b} \f{1}{d}\sum_{0\le m\lt d}\f{1}{q^3}\sum_{0\lt a\lt q^3}\bigg|\sum_{n\in\cs{A}(x)}\me\Big(\Big(\f{a}{q^3}+\f{m}{d}\Big)n\Big)\bigg| \\
  ={} & \sum_{d|b^3-b} \f{1}{d}\sum_{0\le m\lt d}\f{1}{q^3}\sum_{0\lt a\lt q^3}\Big|S_x\Big(\f{a}{q^3}+\f{m}{d}\Big)\Big| \\
  \ll{}& \f{1}{q^3}\sum_{0\lt a\lt q^3}\max_{m\in\bb{Z}}\Big|S_x\Big(\f{a}{q^3}+\f{m}{b^3-b}\Big)\Big|,
\end{align*}
where $S_x(t)=\sum_{n\in\cs{A}(x)}\me(nt)$ is the Fourier transform of $\cs{A}$. Now, without loss of generality, we may assume that $x=b^{2L+1}$, so that by \eqref{eq: |S(t)| less than a sum over Phi}, 
\[
|S_x(t)|\ll \sum_{l=0}^{L}|\Phi_l(t)|.
\]
Therefore, 
\begin{align*}
  E \ll{}&  \sum_{\substack{q\le x^{\f{1}{3}}\\ (q,b^3-b)=1}}\sum_{l=0}^{L}\f{1}{q^3}\sum_{0\lt a\lt q^3} \max_{m\in\bb{Z}}\Big|\Phi_l\Big(\f{a}{q^3}+\f{m}{b^3-b}\Big)\Big| \\
  \ll{} & \log{x}\sum_{\substack{q\le x^{\f{1}{3}}\\ (q,b^3-b)=1}}\sum_{l=0}^{L}\f{1}{q^3}\sum_{\substack{0\lt a\lt q^3 \\ (a,q^3)=1}} \max_{m\in\bb{Z}}\Big|\Phi_l\Big(\f{a}{q^3}+\f{m}{b^3-b}\Big)\Big|,
\end{align*}
where the last bound follows from writing $\f{a}{q^3}$ in lowest terms. We now write 
\[
\log{x}\sum_{\substack{q\le x^{\f{1}{3}}\\ (q,b^3-b)=1}}\sum_{l=0}^{L}\f{1}{q^3}\sum_{\substack{0\lt a\lt q^3 \\ (a,q^3)=1}} \max_{m\in\bb{Z}}\Big|\Phi_l\Big(\f{a}{q^3}+\f{m}{b^3-b}\Big)\Big|=S_1+S_2, 
\]
where $S_1$ is the sum over $q\le (\log{x})^B$ and $S_2$ is over $(\log{x})^B\lt q \le x^{\f13}$ for some $B$ to be chosen later. Explicitly, we have 
\[
S_1=\log{x}\sum_{\substack{q\le (\log{x})^B\\ (q,b^3-b)=1}}\sum_{l=0}^{L}\f{1}{q^3}\sum_{\substack{0\lt a\lt q^3 \\ (a,q^3)=1}} \max_{m\in\bb{Z}}\Big|\Phi_l\Big(\f{a}{q^3}+\f{m}{b^3-b}\Big)\Big|,
\]
and 
\[
S_2=\log{x}\sum_{\substack{(\log{x})^B\lt q\le x^{\f{1}{3}}\\ (q,b^3-b)=1}}\sum_{l=0}^{L}\f{1}{q^3}\sum_{\substack{0\lt a\lt q^3 \\ (a,q^3)=1}} \max_{m\in\bb{Z}}\Big|\Phi_l\Big(\f{a}{q^3}+\f{m}{b^3-b}\Big)\Big|.
\]
We recall that it is convenient to work with the normalized version of $\Phi_l(t)$:
\[
\tilde{\Phi}_l(t)=\frac{1}{\#\cs{B}_{2l+1}}\Phi_l(t) \asymp \f{1}{b^l} \Phi_l(t).
\]
In order to bound $S_1$, we employ the $L^{\infty}$ bound from Lemma \ref{lem: L Infinity bound}:
\begin{align*}
  S_1\ll{} & \log{x}\sum_{\substack{q\le (\log{x})^B\\ (q,b^3-b)=1}}\sum_{l=0}^{L}\f{1}{q^3}\sum_{\substack{0\lt a\lt q^3 \\ (a,q^3)=1}} b^l \exp\Big(-c_b\frac{l}{\log{q}}\Big) \\
  \ll{} & \log{x}\sum_{q\le (\log{x})^B} L b^{L} \exp\Big(-c_b\frac{L}{\log{q}}\Big)\\
  \ll{} & (\log{x})^{B+2}\exp\Big(-\f{c_b}{B}\frac{\log{x}}{\log{\log{x}}}\Big)\sqrt{x} \\
  ={} & o(\sqrt{x}),
\end{align*}
where the third bound follows from recalling that $x=b^{2L+1}$. For $S_2$, we first choose $l'$ maximally subject to $l'\le l$ and $b^{2l'}\le q^3$ to see that, by Lemma \ref{lem: Decreasingness of Phi}, we have 
\[
\sum_{\substack{0\lt a\lt q^3 \\ (a,q^3)=1}} \max_{m\in\bb{Z}}\Big|\tilde{\Phi}_l\Big(\f{a}{q^3}+\f{m}{b^3-b}\Big)\Big|\le \sum_{\substack{0\lt a\lt q^3 \\ (a,q^3)=1}} \max_{m\in\bb{Z}}\Big|\tilde{\Phi}_{l'}\Big(\f{a}{q^3}+\f{m}{b^3-b}\Big)\Big|.
\]
Now, by Lemma \ref{lem: Large sieve type lemma} and Lemma \ref{lem: L1 Bound for Phi}, for $b\ge 1100$ there is some $\de\defeq \de_b\gt 0$ such that 
\begin{align*}
  \sum_{\substack{0\lt a\lt q^3 \\ (a,q^3)=1}} \max_{m\in\bb{Z}}\Big|\tilde{\Phi}_{l'}\Big(\f{a}{q^3}+\f{m}{b^3-b}\Big)\Big|\ll{} & q^3\int_{0}^{1}|\tilde{\Phi}_{l'}(t)|\dt + \int_{0}^{1}|\tilde{\Phi}_{l'}'(t)|\dt \\
  \ll{} & \frac{q^3}{b^{2l'(\f13+\de)}}+ l'b^{2l'(\f23-\de)}.
\end{align*}
From our choice of $l'$, we observe that 
\[
\frac{q^3}{b^{2l'(\f13+\de)}}+ l'b^{2l'(\f23-\de)}\ll \frac{q^3}{b^{2l(\f13+\de)}}+lq^{2-3\de}.
\]
This shows that 
\begin{align*}
  S_2={} & \log{x}\sum_{\substack{(\log{x})^B\lt q\le x^{\f{1}{3}}\\ (q,b^3-b)=1}}\sum_{l=0}^{L}\f{(b-1)b^l}{q^3}\sum_{\substack{0\lt a\lt q^3 \\ (a,q^3)=1}} \max_{m\in\bb{Z}}\Big|\tilde{\Phi}_l\Big(\f{a}{q^3}+\f{m}{b^3-b}\Big)\Big| \\
  \ll{} & \log{x}\sum_{\substack{(\log{x})^B\lt q\le x^{\f{1}{3}}\\ (q,b^3-b)=1}}\sum_{l=0}^{L}\f{b^l}{q^3} \Big(\frac{q^3}{b^{2l(\f13+\de)}}+lq^{2-3\de} \Big)\\
  \ll{} & \log{x}\sum_{(\log{x})^B\lt q\le x^{\f{1}{3}}}b^L\Big(\f{1}{b^{2L(\f13+\de)}}+\f{L}{q^{1+3\de}} \Big) \\
  \ll{} & \sqrt{x}\log{x}\Big(\f{1}{x^{\de}} + \int_{(\log{x})^B}^{x^{\f13}}\frac{\dt}{t^{1+3\de}}\Big) \\
  ={} & \sqrt{x}\log{x}\Big(\f{1}{x^{\de}} - \frac{1}{3\de x^{\de}}+\frac{1}{3\de(\log{x})^{3\de B}}\Big).
\end{align*}
Therefore, by taking  $B\defeq \f{1}{\de}\gt \f{1}{3\de}$, we see that $S_2=o(\sqrt{x})$, so that 
\[
E\ll S_1+S_2=o(\sqrt{x}).
\]
This completes the proof.
\end{proof}
\section{Squarefree reversible integers}
Recall that
\[
\cs{H}_b= \{n\in\bb{Z}_{\ge 1} \; : \; (n,b^3-b)=1, \; (\mirror{n}_b,b^3-b)=1\}.
\]
Following \cite{DartygeRivatSwaenepoel}, for $x=b^l$, and $\al,\be \in\bb{R}$, we let 
\[
\mathcal{F}_x(\al,\be)=\frac{1}{x}\sum_{0\le n\lt x}\me(\al\mirror{n}_b-\be n)
\]
We state the results we need from \cite{DartygeRivatSwaenepoel}, showing that $\mathcal{F}_x(\al,\be)$ plays the role of the Fourier transform from Theorem \ref{th: Main Theorem}:
\begin{lem}\label{lem: NewCecileResult}
Assume that $x=b^k$ and $y=b^l$ for some $k,l\in\bb{Z}^{+}$ and $b\ge 2$. 
\begin{enumerate}[{\rm (i)}]
  \item (\cite{DartygeRivatSwaenepoel}, Lemma 6.10) Let $d\gt 1$ be an integer such that $(d,b^3-b)=1$. Then, for all $a\lt d$ with $(a,d)=1$, we have an $L^{\infty}$ bound of the form
      \[
      \Big|\mathcal{F}_x\Big(\frac{a}{d},\be \Big)\Big |\ll \exp\Big(-c_b \frac{\log{x}}{\log{d}}\Big)
      \]
  for some positive constant $c_b$ that depends only on $b$. 
  \item (\cite{DartygeRivatSwaenepoel}, Lemma 6.17) We have an $L^{1}$ bound of the form 
  \[
  \int_{0}^{1}|\mathcal{F}_x(\al,\be)|\d\be \ll \frac{1}{x^{1-\eta_b}} \quad\mbox{and}\quad \int_{0}^{1}|\mathcal{F}_x(\al,\be)|\d\be \ll x^{\eta_b}.
  \]
  for some $\eta_b\in (0,0.465)$. In particular, note that $1-\eta_b=\frac{1}{2}+\de$ for some $\de\gt 0$.
  \item (\cite{DartygeRivatSwaenepoel}, Lemma 6.2) If $x\le y$, then $|\mathcal{F}_y(\al,\be)| \le |\mathcal{F}_x(\al y x^{-1},\be)|$ for all $\al,\be\in \bb{R}$.
\end{enumerate}
\end{lem}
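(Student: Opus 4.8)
The plan is to obtain each of the three parts as a direct transcription of the corresponding result of Dartyge, Rivat and Swaenepoel, the only genuine work being to reconcile their normalization with the function $\mathcal{F}_x(\al,\be)=\tfrac1x\sum_{0\le n<x}\me(\al\mirror{n}_b-\be n)$ fixed above. First I would record that their sums are stated without the prefactor $1/x$, so that dividing their estimates by $x=b^l$ converts an $L^{\infty}$ bound of size $x\exp(-c_b\log x/\log d)$ into the claimed $\exp(-c_b\log x/\log d)$, and rescales the $L^1$ estimates by the same factor. Apart from this bookkeeping there is nothing to compute; the substance of all three parts lives in \cite{DartygeRivatSwaenepoel}.

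For part (i) I would invoke \cite{DartygeRivatSwaenepoel}, Lemma 6.10, which furnishes precisely the exponential saving for $\mathcal{F}_x(a/d,\be)$ under $(a,d)=1$ and $(d,b^3-b)=1$, uniformly in $\be$, with $c_b$ depending only on $b$. The hypothesis $(d,b^3-b)=1$ is exactly what lets the reversal map be analyzed digit by digit, and it is the same coprimality that forces the applications to be phrased inside $\cs{H}_b$.

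For part (ii) the two displayed bounds repackage \cite{DartygeRivatSwaenepoel}, Lemma 6.17: the first is the $L^1$ bound on $\be\mapsto\mathcal{F}_x(\al,\be)$, and the second, which should be read as the $L^1$ bound on the $\be$-derivative $\partial_\be\mathcal{F}_x(\al,\be)$, is its differentiated analogue, both uniform in $\al$. These are exactly the two inputs $\int_0^1 F_x$ and $\int_0^1|F_x'|$ required by hypothesis (ii) of Theorem \ref{th: Main Theorem} in the case $k=2$, with $\mathcal{F}_x(\al,\cdot)$ in the role of $F_x$ and $\be$ in the role of $t$. The one quantitative point I would isolate is that \cite{DartygeRivatSwaenepoel} give $\eta_b<0.465<\tfrac12$, so one may set $\de=\tfrac12-\eta_b>0$ and write $1-\eta_b=\tfrac12+\de$; the matching pair of exponents $x^{-(1/2+\de)}$ and $x^{1/2-\de}$ is then visible.

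Finally, for part (iii) I would quote \cite{DartygeRivatSwaenepoel}, Lemma 6.2, giving the self-similarity inequality $|\mathcal{F}_y(\al,\be)|\le|\mathcal{F}_x(\al y x^{-1},\be)|$ for $x\le y$ powers of $b$; this plays the role of the decreasing property (iii) of Theorem \ref{th: Main Theorem}, the rescaling $\al\mapsto\al y x^{-1}$ reflecting that appending digits multiplies the reversed value by a power of $b$. I expect no real obstacle in the lemma itself, since it is a compilation: the only delicate fact is the numerical inequality $\eta_b<\tfrac12$ surfaced in part (ii), which is precisely what makes the framework applicable to squarefree (rather than merely higher powerfree) reversible integers.
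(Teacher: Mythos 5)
Your proposal matches the paper exactly: the lemma is stated there without proof, as a direct compilation of Lemmas 6.10, 6.17, and 6.2 of \cite{DartygeRivatSwaenepoel}, with nothing beyond the $1/x$ normalization bookkeeping you describe. You also correctly diagnose the typo in part (ii) — the second displayed integral should be $\int_0^1\big|\partial_\be\mathcal{F}_x(\al,\be)\big|\,\d\be$, i.e.\ the derivative bound playing the role of $\int_0^1|F_x'(t)|\,\dt$ in Theorem \ref{th: Main Theorem} with $k=2$, which is exactly how the paper uses it.
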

The proof of Theorem \ref{th: n and rev(n) squarefree} is completely analogous to the proof of Theorem \ref{th: Cubefree}, so we only sketch the main ideas. We begin by writing
\begin{align*}
  \sum_{\substack{n\in\cs{H}_b(x) \\ n,\mirror{n}_b \text{ are squarefree}}}1={} & \sum_{\substack{n\le x \\ (n\mirror{n}_b, b^3-b)=1}}\sum_{\substack{d_1^2|n \\ d_2^2|\mirror{n}_b}}\mu(d_1)\mu(d_2) \\
  ={}& \sum_{\substack{d_1,d_2\le \sqrt{x}\\ (d_1d_2,b^3-b)=1}}\mu(d_1)\mu(d_2)\Big(\sum_{\substack{n\le x \\ (n\mirror{n}_b,b^3-b)=1 \\d_1^2|n \\ d_2^2|\mirror{n}_b }}1-\frac{\#\cs{H}_b(x)}{d_1^2d_2^2}+\frac{\#\cs{H}_b(x)}{d_1^2d_2^2}\Big) \\
  ={} & \frac{\#\cs{H}_b(x)}{\zeta(2)^2}\prod_{p|b^3-b}\Big(1-\frac{1}{p^2}\Big)^{-2}+o\Big(\frac{\# \cs{H}_b(x)}{x} \Big)+O(E), 
\end{align*}
where
\[
E=\sum_{\substack{n\le x \\ (n\mirror{n}_b,b^3-b)=1 \\d_1^2|n \\ d_2^2|\mirror{n}_b }}1-\frac{1}{d_1^2d_2^2}\sum_{\substack{n\le x \\ (n\mirror{n}_b,b^3-b)=1}}1.
\]
The proof of Theorem \ref{th: n and rev(n) squarefree} will be complete once we show that $E=o(\#\cs{H}_b(x))$, and by the definition of $\cs{H}_b$, it is clear that it suffices to show that $E=o(x)$. By the orthogonality relations \eqref{eq: OrthogonalityRelations n divisible by d} and M\"{o}bius inversion \eqref{eq: MobiusInversion}, observe that 
\[
E=\sum_{\substack{d_1,d_2\le \sqrt{x}\\ (d_1d_2,b^3-b)=1}}\sum_{d|b^3-b} \mu(d) \sum_{n\le x}\frac{1}{d_1^2d_2^2d}\sum_{0\le m\lt d}\sum_{\substack{0\lt a_1\lt d_1\\0\lt a_2\lt d_2 }}\me\Big(\frac{a_1}{d_1^2}n+\frac{a_2}{d_2^2}\mirror{n}_b\Big)\me\Big(\frac{mn\mirror{n}_b}{d}\Big)
\]
Interchanging the order of summation, using the definition of $\mathcal{F}_x(\al,\be)$, and using the condition $d|b^3-b$ gives us 
\[
E\ll x \sum_{d_1,d_2\le \sqrt{x}} \frac{1}{d_1^2d_2^2}\sum_{\substack{0\lt a_1\lt d_1\\0\lt a_2\lt d_2 }}\max_{m\in\bb{Z}}\Big|\mathcal{F}_x\Big(\frac{a_2}{d_2^2},-\frac{a_1}{d_1^2}+\frac{m}{b^3-b}\Big) \Big|.
\]
Finally, we split the sum over $d_2\le \sqrt{x}$ into a sum over $d_2\le (\log{x})^{B}$ for some $B$, and a sum over $(\log{x})^{B}\lt d_2\le \sqrt{x}$, and then we use Lemma \ref{lem: NewCecileResult} in the following fashion: for the first sum, we employ the $L^{\infty}$ bound, and for the second sum we combine the $L^{1}$ bound together with the decreasing property of $\mathcal{F}_x(\al,\be)$. After some routine computations, it is possible to see that $E=o(x)$, completing the proof of Theorem \ref{th: n and rev(n) squarefree}.
\section{Squarefree integers with missing digits}
Given an integer $b\ge 2$, and $a_0\in \{0,\ldots ,b-1\}$, we recall that
\[
\cs{M}_b =\Big\{ \sum_{j=0}^{k}n_j b^j \; : \; n_j\in \{0,\ldots ,b-1\}\setminus \{a_0\}, \, k\ge 0\Big\}
\]
denotes the set of integers in base $b$ without $a_0$ in their expansion in base $b$. We also let $F_x(t)$ denote the normalized Fourier transform associated to $\cs{M}_b$. We summarize the results we need from Maynard \cite{MaynardDigitsOfPrimes, Maynard} in the following lemma:
\begin{lem}
Assume that $x=b^k$ and $y=b^l$ for some $k,l\in\bb{Z}^{+}$. 
\begin{enumerate}[{\rm (i)}]
  \item (\cite{Maynard}, (2.3)) If $x\le y$, then $F_y(t)\le F_x(t)$ for all $t\in [0,1]$.
  \item (\cite{MaynardDigitsOfPrimes}, Lemma 8.4) Let $d\gt 1$ be an integer with $d\lt x^{\f13}$, and $(d,b)=1$. Then, for all $a\lt d$ with $(a,d)=1$, we have an $L^{\infty}$ bound of the form
      \[
      F_x\Big(\frac{a}{d}\Big)\ll \exp\Big(-c_b \frac{\log{x}}{\log{d}}\Big)
      \]
  for some positive constant $c_b$ that depends only on $b$. 
  \item (\cite{Maynard}, Lemma 10.2) We have an $L^{1}$ bound of the form 
  \[
  \int_{0}^{1}F_x(t)\dt \ll \Big(\frac{\lambda}{b}\Big)^k=\frac{1}{x^{1-\frac{\log{\lambda}}{\log{b}}}},
  \]
  where $\la$ is the largest eigenvalue of the $b^3\times b^3$ matrix $M$ given by 
  \[
  M_{ij}\defeq \begin{cases}
                 G(t_1,t_2,t_3,t_4), {}& \mbox{if } i-1=t_2+t_3b+t_4b^2, j-1=t_1+t_2b+t_3b^2 \\ {}&\mbox{for some } t_1,t_2,t_3,t_4\in \{0,\ldots, b-1\} \\
                 0,{}& \mbox{otherwise},
               \end{cases}
  \]
  where 
  \[
  G(t_1,t_2,t_3,t_4)\defeq \sup_{|u|\le b^{-4}}\frac{1}{b-1}\Big|\frac{\me(\sum_{j=1}^{4}t_j b^{-j} + bu)-1}{\me(\sum_{j=1}^{4}t_j b^{-j-1} + u)-1}-\me(\sum_{j=1}^{4}a_0 t_j b^{-j-1} + a_0u) \Big|.
  \]
\end{enumerate}
\end{lem}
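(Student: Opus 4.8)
Since every part of this lemma is attributed to Maynard's work, the plan is to reconstruct each bound from the one structural fact that underlies all three: for $x=b^k$ the normalized Fourier transform of $\cs{M}_b$ factors over digit positions. Expanding $\me(nt)=\prod_j\me(n_jb^jt)$ and summing over admissible digit strings gives (up to the convention for the number $0$) the identity
\[
F_x(t)=\prod_{j=0}^{k-1}\f{1}{b-1}\Big|\sum_{\substack{0\le c\lt b\\ c\neq a_0}}\me(cb^jt)\Big|,
\]
in which each factor is the modulus of an average of $b-1$ unit vectors and so lies in $[0,1]$. I would take this product as the common backbone and dispatch the three parts from it.

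Part (i) falls out immediately: moving from $x=b^k$ to $y=b^l$ with $l\ge k$ only appends further factors, each at most $1$, whence $F_y(t)\le F_x(t)$ for every $t$. No further input is needed beyond the factorization.

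For part (ii) the plan is to produce enough factors bounded strictly below $1$. Because $(b,d)=1$ and $(a,d)=1$, the residues $b^ja\ (\mathrm{mod}\ d)$ cannot remain clustered near $0$, so for a positive density of indices $j$ the quantity $\norm{b^ja/d}$ is bounded below; on each such index the corresponding digit-sum factor is at most some $\rho_b\lt 1$ depending only on $b$. Counting these good indices — at least of order $k/\log d$, once one controls how short the relevant orbit of $b$ modulo $d$ can be — yields $F_x(a/d)\ll\rho_b^{c k/\log d}\ll\exp(-c_b\log x/\log d)$. This is the same mechanism as in the palindrome estimate of Lemma \ref{lem: L Infinity bound}, now carried out for the missing-digit factors, and it is Maynard's Lemma 8.4.

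Part (iii) is the genuinely hard input, and it is where I expect the real difficulty to lie. The plan is to estimate $\int_0^1 F_x(t)\dt$ by partitioning $[0,1)$ according to blocks of consecutive base-$b$ digits of $t$ and setting up a transfer-operator recursion: the function $G(t_1,t_2,t_3,t_4)$ records the supremum of a single normalized factor over the window pinned down by four digits, and the matrix $M$ encodes the transition from one three-digit state to the next, so that the integral is governed by the action of $M^{k}$. The Perron--Frobenius eigenvalue $\lambda$ of $M$ then controls the growth, giving $\int_0^1 F_x(t)\dt\ll(\lambda/b)^k=x^{-(1-\log\lambda/\log b)}$. The crux is the spectral estimate: one must verify that $\lambda$ is small enough relative to $b$ that the exponent $1-\log\lambda/\log b$ exceeds $1/k$ for the relevant $k$, which is precisely what forces the numerical thresholds on $b$ in the applications. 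For the present purposes I would record $M$ and $G$ as above and invoke Maynard's Lemma 10.2 for the eigenvalue bound.
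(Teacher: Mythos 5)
Your reconstruction is sound and matches how the paper handles this lemma: the paper gives no proof at all, importing (i)--(iii) verbatim from Maynard's two papers, and your product-formula derivations of (i) and (ii) together with your transfer-operator reading of (iii) (with $G$ as window supremum, $M$ as the three-digit-state transition matrix, and the Perron--Frobenius eigenvalue $\lambda$ governing $M^k$) are exactly the mechanisms in the cited proofs, with the hard spectral input in (iii) deferred to Maynard just as the paper defers it. Two cosmetic caveats at the level of a sketch: the factorization $S_x(t)=\prod_{j=0}^{k-1}\sum_{c\neq a_0}\me(cb^jt)$ fails literally when $a_0=0$ (one must sum over digit lengths, as your parenthetical hints), and in (ii) the good indices $j$ with $\norm{b^ja/d}$ bounded below have density $\asymp 1/\log d$ rather than a positive constant density, which your final count of $\gg k/\log d$ correctly reflects.
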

Therefore, $\cs{A}$ satisfies the assumptions of Theorem \ref{th: Main Theorem}, with the slightly more restrictive condition that $(d,b)=1$. As we have seen in previous sections, this only changes the asymptotic by a factor of $\prod_{p|b}(1-1/p^2)^{-1}$. More precisely, if we let $\al\defeq 1-\frac{\log{\lambda}}{\log{b}}$, a completely analogous argument as in the previous sections, shows that if $\al\gt 1/k$, then $\cs{A}$ contains infinitely many $k^{th}$ powerfree integers. Moreover 
\[
  \sum_{\substack{n\in\cs{M}_b^{*}(x) \\n \text{ is } k^{\text{th}} \text{ powerfree}}}1\sim \frac{\#\cs{M}_b^{*}(x)}{\zeta(k)}\prod_{p|b}\Big(1-\frac{1}{p^k}\Big)^{-1},
  \]
where $\cs{A}^{*}=\{a\in \cs{A} \; : \; (a,b)=1\}$. The following table shows the approximate values of $\al$ depending on the base $b$ and the excluded digit\footnote{We adapted the Mathematica file from Maynard \cite{Maynard}, and adjusted it accordingly for lower bases $b$. Our Mathematica file can be found as an ancillary file on arXiv:2504.08502.} $a_0$:
\begin{figure}[h!]
\begin{tabular}{|c||*{7}{c|}}
\hline
\multicolumn{1}{|c||}{$a_0 \backslash b$} & 3 & 4 & 5 & 6 & 7 & 8 & 9 \\
\hline\hline
0 & 0.37837 & 0.51110 & 0.57643 & 0.61599 & 0.64284 & 0.66249 & 0.67762 \\
\hline
1 & 0.36285 & 0.45387 & 0.52250 & 0.57233 & 0.60633 & 0.62852 & 0.64559 \\
\hline
2 & 0.37837 & 0.45387 & 0.55152 & 0.56422 & 0.59963 & 0.61732 & 0.63896 \\
\hline
3 &         & 0.51110 & 0.52250 & 0.56422 & 0.61955 & 0.61711 & 0.63511 \\
\hline
4 &         &         & 0.57643 & 0.57233 & 0.59963 & 0.61711 & 0.65576 \\
\hline
5 &         &         &         & 0.61599 & 0.60633 & 0.61732 & 0.63511 \\
\hline
6 &         &         &         &         & 0.64284 & 0.62852 & 0.63896 \\
\hline
7 &         &         &         &         &         & 0.66249 & 0.64559 \\
\hline
8 &         &         &         &         &         &         & 0.67762 \\
\hline
\end{tabular}
\caption{Values of $\alpha$ accurate to 5 decimals.}
\end{figure}

From the table and our previous discussion, Theorem \ref{th: SquarefreeMissingDigits} follows immediately. \\

\textbf{Acknowledgments} I would like to thank C\'ecile Dartyge for introducing me to the problem of powerfree palindromes. I am grateful to Daniel Johnston for pointing out the results from Bhomik and Suzuki regarding the $L^{1}$ and $L^{\infty}$ estimates for the Fourier transform of the set associated to reversible integers. I would also like to thank my PhD supervisor, Lola Thompson, for her helpful suggestions throughout the course of writing this paper. I am grateful to David Hokken and Berend Ringeling for their insightful discussions on the decreasing nature of $f_{2l+1}(t)$. 
\printbibliography

@Control{biblatex-control,
  options = {3.11:0:0:1:0:1:1:0:0:0:0:12:3:1:99:1:0:0:3:1:79:+:+:anyt},
}
\end{document}